\newtheorem{theorem}{Theorem}[section]
\newtheorem{corollary}[theorem]{Corollary}
\newtheorem{lemma}[theorem]{Lemma}
\newtheorem{proposition}[theorem]{Proposition}
\theoremstyle{definition}
\newtheorem{definition}[theorem]{Definition}
\theoremstyle{remark} \theoremstyle{remark}
\newtheorem{remark}[theorem]{Remark}
\newtheorem{example}[theorem]{Example}
\numberwithin{equation}{section}
\newcommand{\R}{\mathbb{R}}
\newcommand{\C}{\mathbb{C}}
\newcommand{\X}{\mathfrak{X}}
\newcommand{\g}{\mathfrak{g}}
\newcommand{\h}{\mathfrak{h}}
\newcommand{\Liek}{\mathfrak{k}}
\newcommand{\z}{\mathfrak{z}}
\newcommand{\m}{\mathfrak{m}}
\newcommand{\ad}{\operatorname{ad}}
\newcommand{\Ker}{\operatorname{Ker}}
\newcommand{\D}{\mathcal{D}}
\newcommand{\A}{\mathcal{A}}
\newcommand{\f}{\varphi}
\newcommand{\Lie}{\mathcal{L}}
\newcommand{\e}{\varepsilon}
\DeclareMathOperator{\Ric}{Ric}
\DeclareMathOperator{\Kill}{B}
\begin{document}
	\title{\textbf{On anti-quasi-Sasakian manifolds of maximal rank}}
	\author{Dario Di Pinto}
	%	\affil[]{Università degli Studi di Bari Aldo Moro,\\Dipartimento di Matematica}
	\date{}	%\today
	\maketitle
	
\begin{abstract}
	We discuss the existence of invariant anti-quasi-Sasakian (aqS) structures of maximal rank on compact homogeneous Riemannian manifolds and on nilpotent Lie groups. In the former case we obtain a non-existence result, while in the latter case we provide a complete classification. We also show that every compact aqS manifold has nonvanishing second Betti number.
\end{abstract}
\bigskip
	
{\noindent \small \textbf{MSC (2020):} 53C15, 53D15, 53C25, 53C30, 22E25. 
\smallskip
		
\noindent	\textbf{Keywords and phrases:} anti-quasi-Sasakian manifolds, compact homogeneous spaces, generalized flag manifolds, weighted Heisenberg Lie group, nilpotent Lie algebras.	
}

\section{Introduction}
In the recent paper \cite{DiP.D.}, the authors introduced a new class of almost contact metric manifolds, called \textit{anti-quasi-Sasakian} (aqS for short), whose characteristic feature is to be non-normal almost contact metric manifolds, locally fibering along the Reeb vector field $\xi$ onto K\"ahler manifolds endowed with a closed $2$-form of type (2,0). Precisely, an almost contact metric manifold $(M,\f,\xi,\eta,g)$ is said to be anti-quasi-Sasakian if 
	\begin{equation}\label{eq:def_intro}
		d\Phi=0,\qquad N_\f=2d\eta\otimes\xi,
	\end{equation}
where $\Phi=g(\cdot,\f\cdot)$ is the fundamental $2$-form associated with the almost contact metric structure, and $N_\f=[\f,\f]+d\eta\otimes\xi$. An anti-quasi-Sasakian manifold is normal if and only if $d\eta=0$, in which case it is cok\"ahler. Actually, the second condition in \eqref{eq:def_intro} expresses the fact that the rank of $\eta$ measures how far the structure is from being normal. The same condition implies that the $2$-form $d\eta$ is $\f$-anti-invariant, namely $d\eta(\f X,\f Y)=-d\eta(X,Y)$ for every $X,Y\in \X(M)$. In fact, the name of such manifolds is due to the fact that, within the class of transversely K\"ahler almost contact metric manifolds, quasi-Sasakian and anti-quasi-Sasakian manifolds are characterized, respectively, by the $\f$-invariance and the $\f$-anti-invariance of $d\eta$. In the case of aqS manifolds, this forces the rank of $\eta$ to be of type $4p+1$, in the sense that $\eta\wedge(d\eta)^{2p}\neq0$ and $(d\eta)^{2p+1}=0$. Consequently, aqS manifolds of maximal rank, i.e. those for which $\eta$ is a contact form, have dimension $4n+1$. 

In \cite{DiP.D.} a Boothby-Wang construction is provided, showing that anti-quasi-Sasakian manifolds can be obtained as principal circle bundles over K\"ahler manifolds endowed with a closed integral $2$-form $\omega$ of type $(2,0)$. Whenever $\omega$ is nondegenerate, the aqS structure on the $\mathbb{S}^1$-bundle has maximal rank. This is the case when the base manifold is hyperk\"ahler and $\omega$ is one of the three K\"ahler forms. Recently, the existence of compact hyperk\"ahler manifolds with integral K\"ahler forms has been showed in \cite{Cortes}.
	
The fact that for an aqS manifold of maximal rank the transverse geometry with respect to $\xi$ is given by a K\"ahler structure with a nondegenerate closed $(2,0)$-form can have geometric obstructions, such as homogeneity and compactness.  By homogeneous anti-quasi-Sasakian manifolds we mean aqS manifolds whose $(\f,\xi,\eta,g)$-structure is invariant under the transitive action of a Lie group $G$. In \cite{DiP.D.} it is proved that there exist no connected homogeneous aqS manifold of maximal rank with $\Ric\ge0$. In Section \ref{Sec:comp.hom.}, we replace the assumption on the Ricci tensor field by requiring the compactness of the manifold, thus obtaining the following non-existence result:

\begin{theorem}\label{Thm:0intro}
	There exist no compact homogeneous anti-quasi-Sasakian manifolds of maximal rank.	
\end{theorem}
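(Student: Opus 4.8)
The plan is to argue by contradiction. Suppose $(M,\f,\xi,\eta,g)$ is a compact homogeneous aqS manifold of maximal rank, so $\dim M=4n+1$. First I would reduce to a compact group: since $M$ is compact, a classical result of Montgomery lets us assume that the Lie group $G$ acting transitively and preserving the structure is compact and connected. Recall from \cite{DiP.D.} that $\xi$ is a unit Killing field and $\mathcal L_\xi\f=\mathcal L_\xi\eta=0$, so the closure $T$ of the Reeb one-parameter group inside $\mathrm{Isom}(M)$ is a torus preserving the whole aqS structure. As $\xi$ is $G$-invariant, every element of $G$ commutes with the Reeb flow, hence with $T$; thus $G':=G\cdot T$ is a compact transitive group in which $T$ is central, and so each element $X$ of the Lie algebra of $T$ induces a $G'$-invariant vector field on $M$. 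Then $\eta(X)=g(\xi,X)$ is a $G'$-invariant function, hence constant, and together with $\mathcal L_X\eta=0$ this gives $\iota_X d\eta=0$, i.e.\ $X$ is a section of $\Ker d\eta$. But maximality of the rank forces $\Ker d\eta=\R\xi$ at each point, so $X=\eta(X)\,\xi$ is a constant multiple of $\xi$. Hence $T$ is one-dimensional: the Reeb flow is periodic, and since $\xi$ is nowhere zero and, by homogeneity, all isotropy groups coincide, it defines a free isometric $S^1$-action on $M$ preserving the aqS structure.

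Passing to the quotient, $\pi\colon M\to N:=M/S^1$ is a principal circle bundle over a compact homogeneous space $N$. Since $\Phi$, $g|_\D$ and $\f|_\D$ are $\xi$-basic and $G'$-invariant, they descend to $N$, yielding a $G'$-invariant metric $g_N$, a $2$-form $\omega_N$ with $d\omega_N=0$ (because $d\Phi=0$), and an almost complex structure $J$ with $\omega_N=g_N(\cdot,J\cdot)$; moreover the identity $N_\f=2\,d\eta\otimes\xi$ forces $[\f,\f]$ to be everywhere proportional to $\xi$, so $J$ is integrable and $(N,J,\omega_N,g_N)$ is a compact homogeneous K\"ahler manifold. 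The closed basic $2$-form $d\eta$ descends to a closed $2$-form $\sigma$ on $N$, which is nondegenerate (again by maximality of the rank) and of type $(2,0)+(0,2)$ (because $d\eta$ is $\f$-anti-invariant); its $(2,0)$-component is then a holomorphic symplectic form, so $K_N$ is trivial and $c_1(N)=0$. By the structure theory of compact homogeneous K\"ahler manifolds (Borel--Remmert), $N$ is biholomorphic to the product of a generalized flag manifold and a complex torus; since flag manifolds are Fano, $c_1(N)=0$ forces the flag factor to be a point, and therefore $N\cong T^{4n}$.

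To conclude, observe that $\eta$ is a principal connection on $\pi\colon M\to T^{4n}$ whose curvature is $d\eta=\pi^*\sigma$, so the real Euler class of the bundle is a nonzero multiple of $[\sigma]$ — nonzero because a nondegenerate closed $2$-form on a closed manifold is never exact. The Gysin sequence of $\pi$ then gives $b_1(M)=b_1(T^{4n})=4n=\dim M-1$. On the other hand, the universal Riemannian cover of a compact homogeneous manifold splits isometrically as $\R^{\,b_1(M)}\times M_0$ with $M_0$ compact and simply connected, which here would force $\dim M_0=1$, an impossibility. This contradiction proves the theorem. I expect the main obstacle to be showing that the Reeb flow is periodic: without it the Reeb foliation need not be a fibration, and one would only obtain a transverse, rather than a genuine, K\"ahler quotient. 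A secondary point is the descent to $N$ of the transverse K\"ahler structure and of the $(2,0)$-form, where one must use the aqS identities $d\Phi=0$ and $N_\f=2\,d\eta\otimes\xi$ rather than soft arguments.
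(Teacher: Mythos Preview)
Your argument is essentially correct and follows a genuinely different path from the paper's. After obtaining the Boothby--Wang fibration $M\to M/\xi$ (quoting \cite{BW} for regularity of the Reeb flow), the paper invokes a result of D\'iaz Miranda and Revent\'os \cite{DM.R.} to conclude that the base is \emph{simply connected}, hence a generalized flag manifold, and then proves directly (Proposition~\ref{Prop:2-forms}) that every closed invariant $2$-form on such a manifold is of type $(1,1)$, contradicting the existence of the nondegenerate $(2,0)$-form. Your route avoids both the simple-connectedness citation and the Lie-theoretic computation on flag manifolds: you let the $(2,0)$-form do the work by forcing $c_1(N)=0$, apply the Borel--Remmert/Matsushima classification of compact homogeneous K\"ahler manifolds to deduce that $N$ is a complex torus, and then reach a contradiction through the topology of $M$. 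The paper's approach is shorter and isolates the obstruction precisely (no invariant $(2,0)$-forms on flags); yours is more self-contained on the Reeb-periodicity side but trades the elementary Proposition~\ref{Prop:2-forms} for heavier classification results.

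One point needs adjusting. The structure theorem you invoke in the last paragraph is not quite as stated: for a compact Riemannian homogeneous manifold one has a finite Riemannian cover $T^k\times M_0\to M$ with $M_0$ compact and simply connected, hence $\tilde M\cong\R^{\,k}\times M_0$, but in general only $b_1(M)\le k$, not equality. Here this is harmless: from $b_1(M)=4n$ and $\dim M=4n+1$ you get $k\in\{4n,4n+1\}$. If $k=4n$ then $M_0$ would be a compact simply connected $1$-manifold, which does not exist; if $k=4n+1$ then $M$ is finitely covered by a flat torus, hence is a flat compact homogeneous manifold and therefore itself a torus, contradicting $b_1(M)=4n$. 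So your conclusion stands, but the sentence should be rephrased accordingly.
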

	
\noindent This is consequence of the fact that such manifolds should fiber onto compact, simply connected, homogeneous K\"ahler manifolds. These are the so-called generalized flag manifolds, which can be realized as quotients of a compact semisimple Lie group $G$ by the centralizer of a torus in $G$. Then, the result in Theorem \ref{Thm:0intro} is obtained by showing that generalized flag manifolds do not admit any invariant closed $2$-form of type $(2,0)$.
	
A topological obstruction to the existence of anti-quasi-Sasakian structures on a compact manifold $M$, even in the inhomogeneous case and for any rank, is represented by the nonvanishing of the second Betti number, as showed in Proposition \ref{Prop:Betti}.  As a consequence, every odd dimensional sphere does not admit any aqS structure. 

In Section \ref{Sec:nilpotent} we discuss the case of anti-quasi-Sasakian structures on nilpotent Lie groups. Invariant aqS structures can be defined in a natural way on $2$-step nilpotent Lie groups $H^{4n+1}_\lambda$, called weigthed Heisenberg Lie groups, for which the commutators in the Lie algebra  $\h^{4n+1}_\lambda$ depend on some weights $\lambda=(\lambda_1,\dots,\lambda_n)$. In fact we show the following:

\begin{theorem}\label{Thm:1intro}
	A  nilpotent Lie algebra $\g$ admits an anti-quasi-Sasakian structure of maximal rank if and only if $\g$ is isomorphic to a weighted Heisenberg Lie algebra $\h^{4n+1}_\lambda$, for some non-zero weights $\lambda_1,\dots,\lambda_n$. 
\end{theorem}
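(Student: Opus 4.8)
The plan is to prove both implications, the backward one by a direct check and the forward one by a structural reduction.

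For the backward implication, I would start from the explicit model of $\h^{4n+1}_\lambda$: a $2$-step nilpotent Lie algebra with an orthonormal basis $\{X_i,Y_i,Z_i,W_i\}_{i=1}^{n}\cup\{\xi\}$, nonzero brackets $[X_i,Z_i]=\lambda_i\,\xi$ and $[Y_i,W_i]=-\lambda_i\,\xi$, contact form $\eta$ metric-dual to $\xi$, and $\f$ fixed by $\f X_i=Y_i$, $\f Z_i=W_i$, $\f\xi=0$. One then checks by inspection that $d\Phi=0$ (immediate, since every bracket lies in $\R\xi$ whereas $\iota_\xi\Phi=0$), that $[\f,\f]=d\eta\otimes\xi$, i.e.\ $N_\f=2\,d\eta\otimes\xi$ (a short computation on the basis), and that $\eta\wedge(d\eta)^{2n}$ is a nonzero multiple of $\bigl(\prod_i\lambda_i\bigr)^{2}$ times the volume form, hence nonvanishing exactly when all $\lambda_i\neq0$; this exhibits an aqS structure of maximal rank.

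For the forward implication, let $\g$ be a nilpotent Lie algebra of dimension $4n+1$ carrying an aqS structure $(\f,\xi,\eta,g)$ of maximal rank. The decisive step is to show that $\xi$ is central. By \cite{DiP.D.}, the Reeb vector field of an aqS manifold is Killing, so $\ad_\xi$ is skew-symmetric with respect to $g$; since $\g$ is nilpotent, $\ad_\xi$ is also a nilpotent endomorphism. A skew-symmetric endomorphism of a Euclidean space is diagonalizable over $\C$ with purely imaginary eigenvalues, so if it is nilpotent it must vanish. Hence $\ad_\xi=0$ and $\R\xi$ is a central ideal. I would then pass to the quotient $\bar\g=\g/\R\xi$, identified as a vector space with $\mathcal H=\Ker\eta=\xi^{\perp}$. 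As $\f(\mathcal H)\subseteq\mathcal H$ and $\f\xi=0$, the complex structure descends to $\bar\f$, the inner product to the compatible metric $\bar g=g|_{\mathcal H}$, and the fundamental form to $\bar\Phi=\Phi|_{\mathcal H}$, which is nondegenerate; $d\Phi=0$ gives $d\bar\Phi=0$, and reducing $[\f,\f]=d\eta\otimes\xi$ modulo $\R\xi$ gives $[\bar\f,\bar\f]=0$, so $\bar\f$ is integrable. Thus $(\bar\g,\bar\f,\bar g)$ is a K\"ahler nilpotent Lie algebra, which by the Benson--Gordon theorem, asserting that a nilpotent Lie algebra carrying a K\"ahler structure is abelian, must be abelian. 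Therefore $[\g,\g]\subseteq\R\xi$, and since maximal rank forces $d\eta\neq0$ we conclude $[\g,\g]=\R\xi$: the algebra $\g$ is $2$-step nilpotent with one-dimensional center $\R\xi$ and bracket $[X,Y]=-d\eta(X,Y)\,\xi$ on $\mathcal H$.

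The last step is to normalize the data. On the Hermitian space $(\mathcal H,g,J:=\f|_{\mathcal H})$ the $2$-form $\omega:=d\eta|_{\mathcal H}$ is $J$-anti-invariant and nondegenerate, so its $(2,0)$-component is a complex-symplectic form on $\mathcal H^{1,0}\cong\C^{2n}$; bringing it to normal form relative to the Hermitian metric yields a $g$-orthonormal, $J$-adapted basis $\{X_i,Y_i,Z_i,W_i\}$ with $J X_i=Y_i$, $J Z_i=W_i$ and $\omega=\pm\sum_i\lambda_i\,(X^i\wedge Z^i-Y^i\wedge W^i)$, $\lambda_i\neq0$. Reading off the brackets identifies $\g$, together with its aqS structure, with $\h^{4n+1}_\lambda$ (and, forgetting the structure, every such algebra is isomorphic to the $(4n+1)$-dimensional Heisenberg algebra, so one may even take $\lambda=(1,\dots,1)$). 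I expect the main obstacle to be not a single long computation but this conceptual core — recognizing that the aqS conditions force the transverse algebra $\bar\g$ to be K\"ahler and hence abelian — together with the slightly delicate point that nilpotency of $\g$ and skew-symmetry of $\ad_\xi$ already pin down $\xi$ as central; the remaining pieces (descent of the structure, integrability of $\bar\f$, and the Hermitian normal form) are routine but must be handled with care.
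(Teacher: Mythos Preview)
Your proof is correct and follows essentially the same route as the paper: show that $\xi$ is central, descend to a nilpotent K\"ahler Lie algebra on $\D=\Ker\eta$, conclude it is abelian, and then put the $\f$-anti-invariant nondegenerate form $d\eta|_\D$ into normal form. The only cosmetic differences are that the paper obtains $\z(\g)=\R\xi$ from $\z(\g)\subset\Ker(d\eta)=\R\xi$ rather than from your ``$\ad_\xi$ is skew-symmetric and nilpotent, hence zero'' argument, cites Hano together with Milnor instead of Benson--Gordon for the abelian conclusion, and produces the normalizing basis via the eigenspace decomposition of $\psi^2$ (Lemma~\ref{Lemma:omega_basis} and Remark~\ref{Rmk:basis}) rather than via a Hermitian normal form for the complex-symplectic form.
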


\noindent Therefore, a compact nilmanifold $G/\Gamma$ of dimension $4n+1$ admits an anti-quasi-Sasakian structure of maximal rank induced by a left-invariant aqS structure on $G$ if and only $G$ is isomorphic to $H^{4n+1}_\lambda$.
The proof of Theorem \ref{Thm:1intro} lies on the fact that nilpotent Lie algebras endowed with a transversely K\"ahler almost contact metric structure of maximal rank are $1$-dimensional central extensions of an abelian Lie algebra. Actually, this argument also applies to nilpotent Lie algebras endowed with a quasi-Sasakian structure of maximal rank, allowing to show that they are all isomorphic to a weighted Heisenberg Lie algebra $\h^{2n+1}_\lambda$. This provides a generalization of the well known fact that the only nilpotent Lie algebra admitting a Sasakian structure is the standard Heisenberg Lie algebra $\h^{2n+1}$ (see \cite{AFV,CM.dN.M.Y_Sas.Nil}). 
\\
		
\textbf{Acknowledgements:} The author would like to thank Oliver Goertsches for sharing some ideas underlying this paper, and also Giulia Dileo and Antonio Lotta for helpful comments. 
	
The author is members of INdAM - GNSAGA (Gruppo Nazionale per le Strutture Algebriche, Geometriche e le loro Applicazioni).

\section{Review of anti-quasi-Sasakian manifolds}
An \textit{almost contact manifold} is an odd dimensional smooth manifold $M^{2n+1}$ endowed with a $(1,1)$-tensor field $\f$, a vector field $\xi$, and a $1$-form $\eta$ satisfying
	$$\f^2=-I+\eta\otimes\xi,\qquad \eta(\xi)=1.$$
It follows that $\f\xi=0$ and $\eta\circ\f=0$, so that $\D:=\Ker\eta=\operatorname{Im}\f$ is a hyperplane distribution of constant rank $2n$. Moreover, the tangent bundle of $M$ splits as $TM=\langle\xi\rangle\oplus\D$, and the restriction $\f|_\D$ is an almost complex structure on $\D$, i.e. $\f|_\D^2=-I$. An almost contact manifold is \textit{normal} if the tensor field $N_\f:=[\f,\f]+d\eta\otimes\xi$ identically vanishes, where $[\f,\f]$ denotes the Nijenhuis torsion of $\f$, explicitely given by
	$$[\f,\f](X,Y)=[\f X,\f Y]+\f^2[X,Y]-\f[X,\f Y]-\f[\f X,Y]$$
for every $X,Y\in\X(M)$.
A Riemannian metric $g$ is said to be compatible with the $(\f,\xi,\eta)$-structure if $$g(\f X,\f Y)=g(X,Y)-\eta(X)\eta(Y),$$ in which case $(M,\f,\xi,\eta,g)$ is called an \textit{almost contact metric manifold} and the Reeb vector field $\xi$ turns out to be unitary and orthogonal to the horizontal distribution $\D$.
We will denote by $\Phi(X,Y):=g(X,\f Y)$ the fundamental $2$-form of the almost contact metric structure. Among all possible almost contact metric manifolds, several classes can be distinguished, the most studied ones being the following:
	
\begin{itemize}[itemsep=0.5pt]
	\item \textit{contact metric} if $d\eta=2\Phi$;
	\item \textit{Sasakian} if $d\eta=2\Phi$, $N_\f=0$;
	\item \textit{cok\"ahler} if $d\eta=0$, $d\Phi=0$, $N_\f=0$;
	\item \textit{quasi-Sasakian} if $d\Phi=0$, $N_\f=0$.
\end{itemize}

Analogous structures can be defined on a $(2n+1)$-dimensional Lie algebra $\g$, determining a left-invariant structure on a Lie group $G$ with Lie algebra $\g$. 
We refer the reader to \cite{Blair,BG,CM.dN.Y_cos,Blair_qS} for an extensive treatment of the above classes. Here we just point out that the contact metric condition implies that $\eta$ is a contact form, in the sense that $\eta\wedge(d\eta)^n$  never vanishes, where $2n+1$ is the dimension of the manifold $M$. More generally, one says that an almost contact structure $(\f,\xi,\eta)$ has constant rank $2p$ if $(d\eta)^p\neq0$ and $\eta\wedge(d\eta)^p=0$, and it has constant rank $2p+1$ if $\eta\wedge(d\eta)^{2p}\neq0$ and $(d\eta)^{2p+1}=0$.  In \cite{Blair_qS} Blair showed that quasi-Sasakian structures cannot have even rank, and in particular, cok\"ahler manifolds have minimal rank $1$, while Sasakian manifolds have maximal rank $2n+1$. In \cite{DiP.D.} the authors introduced the class of anti-quasi-Sasakian manifolds. Next we recall some basic facts, referring to the same paper for details. 
	
\begin{definition}
	An almost contact metric manifold $(M,\f,\xi,\eta,g)$ is called \textit{anti-quasi-Sasakian} (aqS for short) if 
	\begin{equation}
			d\Phi=0,\qquad N_\f=2d\eta\otimes\xi.
	\end{equation}
\end{definition}
	
\noindent  In particular, the second equation is referred as the \textit{anti-normal condition}. It implies that $d\eta(\xi,\cdot)=0$ and $d\eta$ is $\f$-anti-invariant, namely $d\eta(\f X,\f Y)=-d\eta(X,Y)$ for every $X,Y\in\X(M)$. This forces the rank of the structure to be of type $4p+1$. In particular, an almost contact structure is both normal and anti-normal if and only if it has minimal rank $1$, i.e. $d\eta=0$, and in fact the intersection between quasi-Sasakian and anti-quasi-Sasakian manifolds is precisely the class of cok\"ahler manifolds. The Reeb vector field $\xi$ of an aqS manifold is Killing. Furthermore, the Lie derivatives $\Lie_\xi\eta$, $\Lie_\xi d\eta$ and $\Lie_\xi\f$ all vanish, so that the structure tensor fields $\f,g,d\eta$ are locally projectable along the $1$-dimensional foliation generated by $\xi$; the induced structure on the space of leaves $M/\xi$ is a K\"ahler structure  endowed with a closed $2$-form of type $(2,0)$. In fact the following holds:

\begin{theorem}\label{Thm:Boothby-Wang1}
	Let $(M,\f,\xi,\eta,g)$ be a anti-quasi-Sasakian manifold such that $\xi$ is regular, with compact orbits. Then, $M$ is a principal circle bundle over a K\"ahler manifold $M/\xi$ endowed with a closed $2$-form $\omega$ of type $(2,0)$ .
	In particular, $\eta$ is a connection form on $M$ and its curvature form is given by $d\eta=\pi^*\omega$, where $\pi:M\to M/\xi$ is the bundle projection.
\end{theorem}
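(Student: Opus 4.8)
The plan is to run a Boothby--Wang construction that upgrades the local statement already recorded above --- projectability of $\f,g,d\eta$ along the $\xi$-foliation and K\"ahlerness of the transverse structure, together with the $(2,0)$-type of the transverse $2$-form --- to a global one under the hypotheses that $\xi$ be regular with compact orbits. Write $B:=M/\xi$. Since $\xi$ is a nowhere-vanishing \emph{regular} vector field all of whose orbits are compact, the classical Boothby--Wang--Tanno theory for regular structures applies: the flow of $\xi$ (complete, since its orbits are compact, and isometric, since $\xi$ is Killing) generates a free action of a circle $\mathbb{S}^1$ on $M$, the orbit space $B$ is a smooth manifold, and $\pi\colon M\to B$ is a principal $\mathbb{S}^1$-bundle whose fundamental vector field is $\xi$. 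As $\Lie_\xi\eta=0$ and $\eta(\xi)=1$, the $1$-form $\eta$ is then a principal connection form. Its curvature is the basic $2$-form with pullback $d\eta$: indeed $\iota_\xi d\eta=0$ (which holds on any aqS manifold) and $\Lie_\xi d\eta=0$, so $d\eta$ is horizontal and invariant, hence $d\eta=\pi^*\omega$ for a unique $\omega\in\Omega^2(B)$; from $\pi^*(d\omega)=d(d\eta)=0$ and the injectivity of $\pi^*$ we get $d\omega=0$.

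Next I would descend the transverse almost Hermitian data. The horizontal distribution of the connection coincides with $\D=\Ker\eta$; it is $\mathbb{S}^1$-invariant, $\f|_\D$ is invariant because $\Lie_\xi\f=0$, and $g|_\D$ is invariant because $\xi$ is Killing. Hence they define a $(1,1)$-tensor $J$ and a Riemannian metric $h$ on $B$ characterized by $J\circ d\pi=d\pi\circ\f$ on $\D$ and $\pi^*h=g|_\D$, and the compatibility of $(\f,g)$ descends to that of $(J,h)$. The fundamental form satisfies $\iota_\xi\Phi=0$ and $\Lie_\xi\Phi=0$, hence is basic and equals $\pi^*\Omega$ for the fundamental form $\Omega$ of $(J,h)$; the aqS condition $d\Phi=0$ then gives $d\Omega=0$, so $(B,J,h,\Omega)$ is almost K\"ahler.

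Integrability of $J$ is where the anti-normal condition enters. Given $\bar X,\bar Y\in\X(B)$ with horizontal lifts $X,Y$, one has that $\f X,\f Y$ are the horizontal lifts of $J\bar X,J\bar Y$, and a direct computation --- using $\eta([Z,W])=-d\eta(Z,W)$ for $Z,W\in\D$ --- shows that the horizontal lift of $N_J(\bar X,\bar Y)$ equals $[\f,\f](X,Y)+d\eta(\f X,\f Y)\,\xi$. Since $N_\f=2\,d\eta\otimes\xi$ gives $[\f,\f](X,Y)=d\eta(X,Y)\,\xi$, and $d\eta$ is $\f$-anti-invariant, this vector is purely vertical; being also a horizontal lift, it must vanish, so $N_J=0$ and $(B,J,h,\Omega)$ is K\"ahler. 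The same $\f$-anti-invariance $d\eta(\f\cdot,\f\cdot)=-d\eta(\cdot,\cdot)$ pushes down to $\omega(J\cdot,J\cdot)=-\omega(\cdot,\cdot)$, i.e.\ $\omega$ has vanishing $(1,1)$-part; thus $\omega\in\Lambda^{2,0}\oplus\Lambda^{0,2}$ is a real closed form, and $d\omega=0$ on the K\"ahler manifold $B$ forces its $(2,0)$-component to be holomorphic. This is what is meant by a closed $2$-form of type $(2,0)$, which completes the argument.

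The step I expect to demand the most care is the first one: turning the bare hypotheses ``$\xi$ regular, with compact orbits'' into a genuine free $\mathbb{S}^1$-action --- and hence a \emph{principal} bundle for which $\eta$ is literally a connection form --- which is the standard but slightly delicate point of the Boothby--Wang argument (constancy of the period, trivial holonomy of the $\xi$-foliation, interplay with the metric). By contrast the transverse computations are routine, the two defining conditions of an aqS structure entering only through $d\Phi=0$ (closedness of $\Omega$) and through the Nijenhuis identity above (integrability of $J$ and the $(2,0)$-type of $\omega$).
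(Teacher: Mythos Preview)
Your argument is correct and is precisely the standard Boothby--Wang procedure one would expect: free circle action from regularity plus compact orbits, $\eta$ as connection form, basicness of $d\eta$ and $\Phi$, descent of the transverse almost Hermitian structure, and then the two defining aqS identities used exactly where they must be --- $d\Phi=0$ for closedness of $\Omega$, and $N_\f=2\,d\eta\otimes\xi$ together with the $\f$-anti-invariance of $d\eta$ for integrability of $J$ and the $(2,0)$-type of $\omega$. Your Nijenhuis computation is right (the expression $[\f,\f](X,Y)+d\eta(\f X,\f Y)\,\xi$ projects to $N_J(\bar X,\bar Y)$ under $d\pi$, and the anti-normal condition makes it vertical, hence it projects to zero), and you are also right to flag the period/free-action step as the only genuinely delicate point.

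There is, however, nothing to compare against: in this paper Theorem~\ref{Thm:Boothby-Wang1} is not proved but only \emph{recalled} from \cite{DiP.D.} as part of the review in Section~2 (``Next we recall some basic facts, referring to the same paper for details''). So the paper contains no proof of its own for this statement. Your write-up is a faithful reconstruction of the argument one finds in \cite{DiP.D.}; the only cosmetic point is that ``closed $2$-form of type $(2,0)$'' in this context means a real $J$-anti-invariant closed $2$-form (i.e.\ the real part of a closed $(2,0)$-form), which is exactly what you obtain.
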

	Conversely:
\begin{theorem}\label{Thm:Boothby-Wang2}
	Let $(B,J,k)$ be a K\"ahler manifold endowed with an integral, closed $2$-form $\omega$ of type $(2,0)$. Then there exist a principal circle bundle $M$ over $B$ and a connection form $\eta$ on $M$ such that the curvature form is given by $d\eta=\pi^*\omega$.
	Moreover, $M$ is endowed with an anti-quasi-Sasakian structure $(\f,\xi,\eta,g)$.
\end{theorem}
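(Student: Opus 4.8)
The plan is to run a Boothby--Wang type construction, building on Kobayashi's classical theorem on circle bundles with prescribed curvature, and then to verify directly that the natural lifted structure is anti-quasi-Sasakian. First I would invoke Kobayashi's theorem: since $\omega$ is a closed $2$-form on $B$ with integral cohomology class, there exist a principal $S^1$-bundle $\pi\colon M\to B$ and a principal connection form $\eta$ on $M$ whose curvature form is $\pi^*\omega$; as the structure group is abelian this is exactly the identity $d\eta=\pi^*\omega$. This gives the first assertion, and it remains to produce the aqS structure. I would take $\xi$ to be the fundamental vector field of the $S^1$-action, rescaled so that $\eta(\xi)=1$, and let $\D=\Ker\eta$ be the horizontal distribution of the connection, so that $\pi_*$ restricts to a pointwise isomorphism $\D\to TB$.

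Next I would lift the K\"ahler data: let $\f$ be the $(1,1)$-tensor field on $M$ determined by $\f\xi=0$ and $\pi_*\circ\f=J\circ\pi_*$ on $\D$ (so $\f|_\D$ is the horizontal lift of $J$), and set $g=\pi^*k+\eta\otimes\eta$. Smoothness of $\f$ and $g$ is immediate from smoothness of the connection and of $(J,k)$, and $g$ is positive definite because $\pi^*k$ is positive on $\D$ and $\eta\otimes\eta$ controls the $\xi$-direction. The almost contact metric identities $\f^2=-I+\eta\otimes\xi$, $\eta(\xi)=1$ and $g(\f\cdot,\f\cdot)=g-\eta\otimes\eta$ then follow from $J^2=-I$ and the Hermitian property of $k$; in particular $\xi$ is a unit vector orthogonal to $\D$. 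A short computation identifies the fundamental $2$-form as $\Phi=\pi^*\Omega$, where $\Omega=k(\cdot,J\cdot)$ is the K\"ahler form of $(B,J,k)$, whence $d\Phi=\pi^*d\Omega=0$ and the first aqS equation holds.

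The core of the proof --- and the only step requiring genuine care --- is the anti-normal condition $N_\f=2\,d\eta\otimes\xi$. Since $N_\f=[\f,\f]+d\eta\otimes\xi$, this is equivalent to $[\f,\f]=d\eta\otimes\xi$, and as $[\f,\f]$ is tensorial it suffices to evaluate it on horizontal lifts $\widetilde X,\widetilde Y$ of vector fields $X,Y$ on $B$ and on pairs involving $\xi$. For horizontal lifts I would use the connection structure equation $[\widetilde X,\widetilde Y]=\widetilde{[X,Y]}-d\eta(\widetilde X,\widetilde Y)\,\xi$, whose vertical term encodes the curvature $d\eta=\pi^*\omega$, together with $\f\xi=0$: expanding $[\f,\f](\widetilde X,\widetilde Y)=[\f\widetilde X,\f\widetilde Y]+\f^2[\widetilde X,\widetilde Y]-\f[\widetilde X,\f\widetilde Y]-\f[\f\widetilde X,\widetilde Y]$, the horizontal parts recombine into the horizontal lift of the Nijenhuis tensor $[J,J](X,Y)$, which vanishes by integrability of $J$, while the only surviving vertical contribution, coming from $[\f\widetilde X,\f\widetilde Y]$, is $-d\eta(\f\widetilde X,\f\widetilde Y)\,\xi$. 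The hypothesis that $\omega$ is of type $(2,0)$, i.e. $\omega(J\cdot,J\cdot)=-\omega(\cdot,\cdot)$, makes $d\eta=\pi^*\omega$ $\f$-anti-invariant, so this vertical term equals $d\eta(\widetilde X,\widetilde Y)\,\xi$; this is the decisive use of the $(2,0)$ condition, and it is exactly what yields the anti-normal identity rather than the normality condition of the ordinary Boothby--Wang construction. For the remaining pairs, $[\xi,\widetilde X]=0$ because the $S^1$-action preserves the connection, so $[\f,\f](\xi,\widetilde X)=0=d\eta(\xi,\widetilde X)\,\xi$, and trivially $[\f,\f](\xi,\xi)=0$. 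Collecting these, $[\f,\f]=d\eta\otimes\xi$, hence $N_\f=2\,d\eta\otimes\xi$, and $(\f,\xi,\eta,g)$ is anti-quasi-Sasakian.

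I do not expect any serious obstacle beyond the bookkeeping in this Nijenhuis computation and the routine verification of the algebraic almost contact metric axioms; all the conceptual weight rests on Kobayashi's circle-bundle existence theorem and on the observation that the type-$(2,0)$ hypothesis on $\omega$ is precisely what converts the vanishing of $[J,J]$ into the required anti-normal identity.
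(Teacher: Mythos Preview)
Your argument is correct and complete. Note, however, that in this paper Theorem~\ref{Thm:Boothby-Wang2} is merely \emph{recalled} in the review section from \cite{DiP.D.} and no proof is given here, so there is no in-paper proof to compare against. What you have written is exactly the expected Boothby--Wang type argument: Kobayashi's theorem furnishes the bundle and connection, the K\"ahler data are lifted horizontally, $d\Phi=0$ follows from closedness of the K\"ahler form, and the Nijenhuis computation on horizontal lifts reduces to $N_J=0$ plus a vertical term $-d\eta(\f\widetilde X,\f\widetilde Y)\,\xi$ which the $(2,0)$ hypothesis converts into $d\eta(\widetilde X,\widetilde Y)\,\xi$. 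The check on pairs involving $\xi$ via invariance of horizontal lifts under the principal action is also standard. This is precisely the construction alluded to in \cite{DiP.D.}, so your approach coincides with the intended one.
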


Considering an anti-quasi-Sasakian manifold $(M,\f,\xi,\eta,g)$, the operators defined by $$A:=-\f\circ\nabla\xi,\qquad\psi:=A\f=-\nabla\xi$$ 
are skew-symmetric and satisfy the following identities:
\begin{equation}\label{eq:A.f.psi}
	A\f=\psi=-\f A,\quad \f\psi=A=-\psi\f,\quad \psi A=-\f A^2=-A\psi.
\end{equation} 
In particular, $A\xi=\psi\xi=0$, $\eta\circ A=\eta\circ\psi=0$, and $A^2=\psi^2$. They  determine the $2$-forms 
$$ \A=g(\cdot,A\cdot),\qquad \Psi=g(\cdot,\psi\cdot).$$
Taking into account also the fundamental $2$-form $\Phi$, one has a triplet of $2$-forms $(\A,\Phi,\Psi)$, satisfying
\begin{equation}\label{eq:closedness}
	d\A=0,\qquad d\Phi=0,\qquad d\eta=2\Psi.
\end{equation}
If $\eta$ has maximal rank, by the third identity in \eqref{eq:closedness} it follows that $\Psi$ is nondegenerate on $\D$, i.e. $\psi:\D\to\D$ is an isomorphism, and hence so is also $A|_\D=\f\psi|_\D$. Finally, being $\psi$ skew-symmetric, $\psi^2=A^2$ is a symmetric operator with nonpositive eigenfunctions. The case where both $A$ and $\psi$ define almost contact structures is characterized by the following:
	
\begin{theorem}[{\cite[Theorem 4.7]{DiP.D.}}]
	Let $(M,\f,\xi,\eta,g)$ be an anti-quasi-Sasakian manifold. Then, the following conditions are equivalent:
	\begin{enumerate}[label=(\roman*), noitemsep]
		\item $\psi^2=A^2=-I+\eta\otimes\xi$;
		\item $Sp(\psi^2)=\{0,-1\}$, with $0$ simple eigenvalue;
		\item $M$ has constant $\xi$-sectional curvature $1$, i.e. $K(\xi,X)=1$ for every $X\in\Gamma(\D)$.
	\end{enumerate}
\end{theorem}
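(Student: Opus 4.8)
The strategy is to prove the two equivalences (i)$\Leftrightarrow$(ii), which is a purely linear-algebraic statement about the symmetric operator $\psi^2$, and (i)$\Leftrightarrow$(iii), which reduces to expressing the curvature endomorphism $R(\,\cdot\,,\xi)\xi$ in terms of $\psi^2$.

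For (i)$\Leftrightarrow$(ii) I would use that $\psi$ is skew-symmetric with $\psi\xi=0$ and $\eta\circ\psi=0$, so $\psi(\X(M))\subseteq\D$; hence $\psi^2$ is a symmetric, negative semi-definite operator preserving the orthogonal splitting $TM=\langle\xi\rangle\oplus\D$ (recall $g(\,\cdot\,,\xi)=\eta$, so $\D=\langle\xi\rangle^{\perp}$). If (i) holds then $\psi^2\xi=0$ while $\psi^2|_{\D}=-I$, and since $\dim\D=2n$ the spectrum is $\{0,-1\}$ with $0$ simple, which is (ii). Conversely, under (ii) the diagonalizability of $\psi^2$ gives an orthogonal decomposition $TM=V_0\oplus V_{-1}$ into eigenspaces; as $\psi^2\xi=0$ and $\dim V_0=1$ one must have $V_0=\langle\xi\rangle$, hence $V_{-1}=\langle\xi\rangle^{\perp}=\D$. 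Thus $\psi^2=-I$ on $\D$ and $\psi^2\xi=0$, i.e. $\psi^2=-I+\eta\otimes\xi$, and since $A^2=\psi^2$ this is (i).

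For (i)$\Leftrightarrow$(iii) the crucial point is the identity
\[
R(X,\xi)\xi=-\psi^2 X\qquad\text{for all }X\in\X(M),
\]
with the sign convention $R(X,Y)Z=\nabla_X\nabla_Y Z-\nabla_Y\nabla_X Z-\nabla_{[X,Y]}Z$. To prove it I would first collect the infinitesimal data available on an aqS manifold: from $\nabla\xi=-\psi$ one gets $\nabla_\xi\xi=-\psi\xi=0$; moreover $\xi$ is Killing and $\Lie_\xi d\eta=0$, and since $d\eta=2\Psi=2g(\,\cdot\,,\psi\,\cdot\,)$ with $\Lie_\xi g=0$, applying $\Lie_\xi$ to the $g$-duality between $\psi$ and $\Psi$ yields $\Lie_\xi\psi=0$; finally, the general identity $\Lie_\xi T=\nabla_\xi T+[\psi,T]$ (valid for any $(1,1)$-tensor $T$ when $\xi$ is Killing with $\nabla\xi=-\psi$), applied to $T=\psi$, gives $\nabla_\xi\psi=0$. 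Expanding $R(X,\xi)\xi=\nabla_X\nabla_\xi\xi-\nabla_\xi\nabla_X\xi-\nabla_{[X,\xi]}\xi$ and using $\nabla_X\nabla_\xi\xi=0$, $\nabla_\xi\nabla_X\xi=-\psi(\nabla_\xi X)$, and $[X,\xi]=\nabla_X\xi-\nabla_\xi X=-\psi X-\nabla_\xi X$, the terms containing $\nabla_\xi X$ cancel and one is left with $R(X,\xi)\xi=-\psi^2 X$. Hence, for a unit $X\in\Gamma(\D)$,
\[
K(\xi,X)=g(R(X,\xi)\xi,X)=-g(\psi^2 X,X)=\|\psi X\|^2 .
\]
Now (i) gives $\psi^2 X=-X$ for $X\in\Gamma(\D)$, so $K(\xi,X)=1$, which is (iii); conversely (iii) states $-g(\psi^2 X,X)=g(X,X)$ for every unit $X\in\Gamma(\D)$, so the symmetric operator $-\psi^2-I$ on $\D$ has identically vanishing quadratic form and therefore vanishes on $\D$; together with $\psi^2\xi=0$ and $A^2=\psi^2$ this is (i).

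I expect the main obstacle to be the careful derivation of $\nabla_\xi\psi=0$ from the Lie-derivative identities characteristic of aqS manifolds, together with the bookkeeping of sign conventions in the Killing-field curvature formula. Once the identity $R(X,\xi)\xi=-\psi^2 X$ is in hand the rest is short, the only subtlety being that $-\psi^2|_{\D}$ is symmetric, so that a quadratic form vanishing on all of $\D$ forces the endomorphism itself to vanish there.
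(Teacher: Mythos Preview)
The paper does not contain a proof of this theorem: it is quoted verbatim as \cite[Theorem 4.7]{DiP.D.} and used as background, so there is no ``paper's own proof'' to compare against. That said, your argument is sound and is essentially the standard one. The equivalence (i)$\Leftrightarrow$(ii) is indeed pure linear algebra on the symmetric operator $\psi^2$ preserving the splitting $\langle\xi\rangle\oplus\D$, and your derivation of $R(X,\xi)\xi=-\psi^2X$ is correct: the identity $\Lie_\xi T=\nabla_\xi T+[\psi,T]$ for a Killing $\xi$ with $\nabla\xi=-\psi$ applied to $T=\psi$ gives $\nabla_\xi\psi=\Lie_\xi\psi$, and $\Lie_\xi\psi=0$ follows from $\Lie_\xi g=0$ together with $\Lie_\xi d\eta=0$ and $d\eta=2g(\cdot,\psi\cdot)$, all of which the paper records among the basic properties of aqS manifolds. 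From there your curvature computation and the polarisation step for the symmetric form on $\D$ are routine. One could shorten the curvature step slightly by invoking the general Killing-field identity $\nabla^2_{X,Y}\xi=-R(X,\xi)Y$, which applied with $Y=\xi$ yields $R(X,\xi)\xi=-\nabla_X\nabla_\xi\xi+\nabla_{\nabla_X\xi}\xi=\psi(\psi X)$ up to the sign of $\psi^2$, but your direct expansion is equally valid.
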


\noindent If any of (i)-(iii) holds, it turns out that $(A,\xi,\eta,g)$ is an anti-quasi-Sasakian structure, while $(\psi,\xi,\eta,g)$ is Sasakian. We say that $(A,\f,\psi,\xi,\eta,g)$ is a double aqS-Sasakian structure according to the following definition. 

\begin{definition}\label{Def:double aqS-S}
	A \textit{double aqS-Sasakian manifold} is a $(4n+1)$-dimensional manifold $M$ endowed with three almost contact metric structures $(\f_i,\xi,\eta,g)$ ($i=1,2,3$) such that $\f_1\f_2=\f_3=-\f_2\f_1$,
	with associated fundamental $2$-forms satisfying
	\begin{equation}\label{eq:daqS-S}
		d\Phi_1=0,\quad d\Phi_2=0,\quad d\eta=2\Phi_3.
	\end{equation}
\end{definition}

\noindent In fact, equation \eqref{eq:daqS-S} implies that $(\f_1,\xi,\eta,g)$ and $(\f_2,\xi,\eta,g)$ are anti-quasi-Sasakian, and $(\f_3,\xi,\eta,g)$ is Sasakian, thus justifying the given name for such manifolds. Furthermore, $M$ locally fibers onto a hyperk\"ahler manifold. For $n=1$, double aqS-Sasakian $5$-manifolds coincide with contact Calabi-Yau manifolds introduced in \cite{TV}, whose structure is a special type of $K$-contact hypo $SU(2)$-structures \cite{dAFFU}.
\\

We conclude this section with a technical lemma which will be used later on.

\begin{lemma}\label{Lemma:omega_basis}
	Le $(M^{4n+1},\f,\xi,\eta,g)$ be an anti-quasi-Sasakian manifold of maximal rank. Then, there exists a local orthogonal frame of $TM$ of type $\{\xi,e_i,Ae_i,\f e_i,\psi e_i\}$, where $i=1,\dots,n$, $\psi^2e_i=-\lambda_i^2e_i$ and $\lambda_i\neq0$.
\end{lemma}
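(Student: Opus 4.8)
The plan is to build the frame by diagonalizing the symmetric operator $\psi^2 = A^2$ restricted to the horizontal distribution $\D$, and then to check that the associated eigenspaces are permuted by $\f$, $A$, $\psi$ in a way that produces the stated orthonormal-type structure. First I would recall from the discussion preceding the lemma that $\psi^2$ is symmetric with nonpositive eigenvalues and that $\psi\xi = A\xi = \f\xi = 0$, so $\psi^2$ preserves $\D = \Ker\eta$; moreover, since $\eta$ has maximal rank, $\Psi$ is nondegenerate on $\D$, hence $\psi|_\D$ is an isomorphism and $0$ is not an eigenvalue of $\psi^2|_\D$. Thus locally we may write $\D = \bigoplus_\alpha \D_{-\lambda_\alpha^2}$, an orthogonal sum of eigendistributions for distinct values $-\lambda_\alpha^2 < 0$; choosing smooth orthonormal eigenvector fields on a suitable open set gives local sections, and the $\lambda_i$ in the statement are just these $\lambda_\alpha$ repeated according to multiplicity.

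Next I would exploit the identities \eqref{eq:A.f.psi}: from $\f\psi = A = -\psi\f$ one gets $\f\psi^2 = \psi^2\f$ and likewise $A$ commutes with $\psi^2$ (indeed $A\psi^2 = \psi A \cdot \psi\cdot$ — more cleanly, $A^2 = \psi^2$ commutes with $A$, and $\f$ commutes with $\psi^2$ since $\f$ anticommutes with $\psi$). Hence each eigendistribution $\D_{-\lambda^2}$ is invariant under $\f$, $A$ and $\psi$. On a fixed $\D_{-\lambda^2}$, pick a unit vector field $e$; I would then argue that $e, Ae, \f e, \psi e$ are mutually orthogonal and all have the same norm. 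Orthogonality of $e$ to $\f e$, $Ae$, $\psi e$ is immediate from skew-symmetry of $\f, A, \psi$. For the remaining pairs one uses \eqref{eq:A.f.psi}: e.g. $g(\f e, Ae) = g(\f e, \f\psi e) = g(e,\psi e) - \eta(e)\eta(\psi e) = 0$ by compatibility of $g$ and $\eta\circ\psi = 0$; similarly $g(Ae,\psi e) = g(\f\psi e, \psi e) = 0$ and $g(\f e,\psi e) = -g(\psi\f e, e) = g(\f\psi e, e) = g(Ae,e) = 0$. For norms, $|\f e|^2 = g(e,e) - \eta(e)^2 = 1$ on $\D$, while $|\psi e|^2 = -g(\psi^2 e,e) = \lambda^2$ and $|Ae|^2 = -g(A^2 e, e) = -g(\psi^2 e,e) = \lambda^2$; so after rescaling we get an orthonormal quadruple, or we simply keep the orthogonal frame $\{e, Ae, \f e, \psi e\}$ as the lemma states (it asks only for an orthogonal frame). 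Finally $\psi^2 e = -\lambda^2 e$ holds by construction, with $\lambda\neq 0$.

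The remaining point is dimension-counting: I would show that on each $\D_{-\lambda^2}$ the quadruples $\{e, Ae, \f e, \psi e\}$ can be assembled into a basis, i.e. that $\D_{-\lambda^2}$ has dimension divisible by $4$ and is spanned by such quadruples. This follows by induction: given a unit $e \in \D_{-\lambda^2}$, the span $V_e$ of $\{e, Ae, \f e, \psi e\}$ is $4$-dimensional (the four vectors are orthogonal and nonzero) and is invariant under $\f, A, \psi$ by the bracket relations in \eqref{eq:A.f.psi} (for instance $A(\f e) = -\psi e$ up to sign, $\psi(Ae) = -\f A^2 e = \lambda^2 \f e$, etc.); hence its orthogonal complement within $\D_{-\lambda^2}$ is also $\f,A,\psi$-invariant, and one repeats. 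Patching the local orthonormal eigenvector fields across eigendistributions and collecting indices gives the global-on-a-chart frame $\{\xi, e_i, Ae_i, \f e_i, \psi e_i\}_{i=1,\dots,n}$ with $4n+1 = \dim M$.

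The main obstacle I anticipate is the bookkeeping needed to confirm that $\{e, Ae, \f e, \psi e\}$ is genuinely $\f,A,\psi$-stable and $4$-dimensional when $\lambda^2 \neq 1$ — one must be careful that relations like $\psi^2 = A^2$ and $\f\psi = A$ do not force coincidences among the four vectors (they do not, precisely because $\lambda \neq 0$); the case $\psi^2 = -I$ on $\D$ is the degenerate-looking one where $Ae, \psi e$ have unit length and the quadruple is manifestly orthonormal, but the same computation goes through for general $\lambda$. A secondary, purely technical issue is the smooth local diagonalizability of $\psi^2$: away from points where eigenvalue multiplicities jump this is standard, and since the statement is local ("there exists a local orthogonal frame") it suffices to work on the open dense set where multiplicities are locally constant, or simply to note that $\psi^2$ is a smooth symmetric endomorphism of $\D$ and invoke the local existence of smooth eigenframes on a neighborhood of any point after passing to a smaller chart.
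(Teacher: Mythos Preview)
Your proposal is correct and follows essentially the same approach as the paper: diagonalize $\psi^2|_\D$ (which has no zero eigenvalue by maximal rank), observe via \eqref{eq:A.f.psi} and skew-symmetry that each eigendistribution is $\f,A,\psi$-invariant and that $e,Ae,\f e,\psi e$ are mutually orthogonal, then iterate on the orthogonal complement. Your write-up is in fact more detailed than the paper's (explicit orthogonality and norm checks, explicit invariance of $V_e$, the smoothness caveat); the only slip is the phrase ``all have the same norm'', which you immediately correct by computing $|\f e|^2=1$ and $|Ae|^2=|\psi e|^2=\lambda^2$.
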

\begin{proof}
	Since the structure has maximal rank, $\psi:\D\to\D$ is an isomorphism, so that the symmetric operator $\psi|^2_\D$ has non zero eigenvalues.  For each eigendistribution $\D_{-\mu^2}$, let us fix a non zero eigenvector field $X\in\Gamma(\D_{-\mu^2})$. Applying \eqref{eq:A.f.psi} and the skew-symmetry of $A,\f,\psi$, one can easily verify that $X,AX,\f X,\psi X$ are mutually orthogonal eigenvector fields associated to the same eigenfunction $-\mu^2$. If $\dim\D_{-\mu^2}>4$, there exists $Y\in\Gamma(\D_{-\mu^2})$ which is orthogonal to $X,AX,\f X,\psi X$. Then, $Y,AY,\f Y,\psi Y$ are orthogonal to each other and also to $X,AX,\f X,\psi X$. Iterating the argument one obtains a local othogonal basis of each eigendistribution, and hence of $\D=\langle\xi\rangle^\perp$.
\end{proof}

\begin{remark}\label{Rmk:basis}
	Up to scaling, we may assume $e_i$ unitary for every $i=1,\dots,n$. Then, one has that $\|\f e_i\|^2=1$ and $\|Ae_i \|^2=\|\psi e_i\|^2=\lambda_i^2$. Setting $e_{n+i}=\frac1{\lambda_i}Ae_i$, $e_{2n+i}=\f e_i$ and $e_{3n+i}=\frac1{\lambda_i}\psi e_i$, we obtain a local orthonormal frame $\{\xi,e_i,e_{n+i},e_{2n+i},e_{3n+i}\}$. Moreover, denoting by $\e_l$ the dual form of $e_l$ ($l=1,\dots,4n$), the  $2$-forms $\A,\Phi,\Psi$ can be expressed as follows:
	\begin{align}
		\A&=-\sum_{i=1}^{n}\lambda_i(\e_i\wedge\e_{n+i}+\e_{2n+i}\wedge\e_{3n+i}),
		\label{eq:A}\\
		\Phi&=-\sum_{i=1}^{n}(\e_i\wedge\e_{2n+i}+\e_{3n+i}\wedge\e_{n+i}),
		\label{eq:Phi}\\
		\Psi&=-\sum_{i=1}^{n}\lambda_i(\e_i\wedge\e_{3n+i}+\e_{n+i}\wedge\e_{2n+i})
		\label{eq:Psi}.
	\end{align} 
\end{remark}

\section{Non-existence results for compact aqS manifolds}
\label{Sec:comp.hom.}
		
Recall that a K\"ahler manifold $(M^{2n},J,k)$ is called \textit{homogeneous} if there exists a Lie group $G$ acting transitively on $M^{2n}$ by holomorphic isometries, i.e. preserving the K\"ahler structure $(J,k)$. Compact homogeneous K\"ahler manifolds have been classified and, in the simply connected case, the following well known result holds (cf. \cite[Theorem 8.89]{Besse} and also \cite{Alek,BFR}):
		
\begin{theorem}\label{Thm:gen.flag}
	Compact, simply connected, homogeneous K\"ahler manifolds are isomorphic, as homogeneous complex manifolds, to the so-called \textit{generalized flag manifolds}, namely to an orbit of the adjoint representation of the connected group of isometries endowed with the canonical complex structure.
\end{theorem}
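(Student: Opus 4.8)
The plan is to pass to a compact transitive group, realize the K\"ahler form as the image of a moment map, and recognize that image as an adjoint orbit carrying a flag-manifold complex structure. First I would reduce to a compact group: since $M$ is compact its full isometry group is compact, so the holomorphic isometries form a closed, hence compact, subgroup, and by homogeneity its identity component $G$ (being open in that subgroup) still acts transitively on the connected manifold $M$ by holomorphic isometries. Thus $M=G/H$ with $G$ compact and connected, and $H$ is connected because $\pi_1(M)=0$ (from the homotopy exact sequence of $H\to G\to M$). One may further replace $G$ by its semisimple part, but this will not be needed.

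Next I would construct a moment map for the K\"ahler form $\omega$. For $X\in\g$ the induced vector field $X_M$ satisfies $\Lie_{X_M}\omega=0$ (as $G$ preserves $\omega$), so $\iota_{X_M}\omega$ is closed, and exact since $H^1(M;\R)=0$; normalizing its primitive to integrate to zero against $\omega^{\wedge n}$ produces functions $\langle\mu,X\rangle$ linear in $X$ and a $G$-equivariant moment map $\mu\colon M\to\g^*$. By equivariance and transitivity $\mu(M)$ is a single coadjoint orbit $\mathcal O$, and from $\langle d\mu_p(v),X\rangle=\omega_p(X_M(p),v)$ one reads off that $\ker d\mu_p=(T_pM)^{\perp_\omega}=0$ while $d\mu_p(T_pM)=T_{\mu(p)}\mathcal O$. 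Hence $\mu\colon M\to\mathcal O$ is a proper local diffeomorphism, i.e. a covering map; since coadjoint orbits of compact connected Lie groups are simply connected (up to a translation they are flag manifolds of the semisimple part), $\mu$ is a diffeomorphism, and the moment-map identity shows it pulls the Kirillov--Kostant--Souriau form back to $\omega$.

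Finally I would identify $\mathcal O$ with a generalized flag manifold. Using a bi-invariant metric to identify $\g^*\cong\g$, the orbit $\mathcal O$ becomes the adjoint orbit of some $X_0\in\g$, whose stabilizer is the centralizer $C_G(T)$ of the torus $T=\overline{\exp\R X_0}$; thus $M\cong\mathcal O\cong G/C_G(T)$, and transporting $J$ along the $G$-equivariant diffeomorphism $\mu$ equips this homogeneous space with a $G$-invariant integrable complex structure. By the classical description of invariant complex structures on $G/C_G(T)$ (Borel, Wang), every such structure realizes it as $G^{\C}/P$ for a parabolic subgroup $P$, that is, as a generalized flag manifold with the canonical complex structure, and $\mu$ is by construction a $G$-equivariant biholomorphism onto it.

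The main obstacle I expect is this last step: upgrading the symplectomorphism of the second paragraph to a biholomorphism relies on the classification of invariant integrable complex structures on $G/C_G(T)$ in terms of $\mathrm{ad}$-invariant orderings (equivalently, choices of positive roots), each of which yields a flag-manifold presentation $G^{\C}/P$; one must check that the structure transported from $M$ is one of these. An alternative, and arguably shorter, route avoids the moment map entirely and invokes the Borel--Remmert structure theorem: every compact homogeneous K\"ahler manifold is a holomorphic fiber bundle over its Albanese torus with simply connected rational homogeneous fiber; since $\pi_1(M)=0$ the torus must be a point, so $M$ is itself a rational homogeneous space $G^{\C}/P$, hence an adjoint orbit with its canonical complex structure.
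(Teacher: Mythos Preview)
The paper does not give its own proof of this theorem: it is quoted as a classical result, with a reference to \cite[Theorem~8.89]{Besse} and to \cite{Alek,BFR}, and is used as a black box in the proof of Theorem~\ref{Thm:comp.hom.aqS}. There is therefore nothing in the paper to compare your argument against.

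That said, your sketch follows one of the standard routes to this result (moment map onto a coadjoint orbit, identified with an adjoint orbit via a bi-invariant form), and the steps you outline are essentially correct. Two small comments. First, your parenthetical ``one may further replace $G$ by its semisimple part, but this will not be needed'' is a bit optimistic: to land on an \emph{adjoint} orbit of the connected isometry group with the canonical flag structure, one does use that a compact group acting effectively, transitively and holomorphically on a compact simply connected K\"ahler manifold is semisimple (this is part of what the references you would cite actually prove). Second, you correctly identify the genuine gap: the moment map gives an equivariant symplectomorphism, and one still has to match the transported invariant complex structure with one of the canonical ones coming from an invariant ordering of $\Delta_M$; this is exactly the Borel--Wang classification you invoke, and it is indispensable here. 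Your alternative via the Borel--Remmert structure theorem is indeed cleaner for the biholomorphic conclusion and is closer in spirit to how the cited references argue.
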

		
In particular, it can be showed that the connected group of isometries acting on $M$ has to be semisimple. In fact, following \cite{Arv} and \cite{BFR}, generalized flag manifolds can be described as the quotient space $G/K$, where $G$ is a connected compact semisimple Lie group, and $K=C(S)$ is the centralizer of a torus $S$ in $G$. Since $G$ is compact and semisimple, the Killing form $\Kill$ is negative definite on the Lie algebra $\g$ of $G$, thus giving rise to a reductive decomposition $\g=\Liek\oplus\mathfrak{m}$, so that $[\Liek,\Liek]\subset\Liek$ and $[\Liek,\m]\subset\m$, being $\mathfrak{m}=\Liek^\perp$ with respect to $\Kill$. On the other hand, considered the maximal torus $T$ in $G$, containing $S$, the complexification $\h=\mathfrak{t}^\C$ of its Lie algebra $\mathfrak{t}$ is a Cartan subalgebra of $\g^\C$ and, denoting by $\Delta$ be the corresponding root system, $\g^\C$ decomposes as
	$$\g^\C=\h\oplus\bigoplus_{\alpha\in\Delta}\g_\alpha=\h\oplus\bigoplus_{\alpha\in\Delta}\C E_\alpha,$$
where the generators $E_\alpha$ can be chosen in such a way that
	\begin{equation}\label{eq:generators}
		\Kill(E_\alpha,E_\beta)=\begin{cases}
			1& \text{if}\ \beta=-\alpha\\
			0& \text{if}\ \alpha+\beta\neq0
		\end{cases}.
	\end{equation}
Moreover, since $\h\subset\Liek^\C$, there exists a subset $\Delta_K\subset \Delta$ such that:
	$$\Liek^\C=\h\oplus\bigoplus_{\alpha\in\Delta_K}\C E_\alpha,\quad
		\mathfrak{m}^\C=\bigoplus_{\alpha\in\Delta_M}\C E_\alpha$$
where $\Delta_M:=\Delta\setminus\Delta_K$.
		
Now, $K$ is connected as it is the centralizer of a torus. Then, invariant almost complex structures on $G/K$ are in one-to-one correspondence with $\ad(\Liek)$-invariant almost complex structures on $\mathfrak{m}$, that is $J:\m\to\m$ such that $J^2=-I$ and $J[X,U]_\m=[JX,U]_\m$ for every $X\in\m$, $U\in\Liek$. Moreover an almost complex structure on $G/K$ is integrable if and only if the corresponding $J:\m\to\m$ satisfies
	$$[JX,JY]_\m-[X,Y]_\m-J[X,JY]_\m-J[JX,Y]_\m=0$$ 
for every $X,Y\in\m$ (see \cite[Chap. X, Proposition 6.5]{KN}). Then, one can show that their $\C$-linear extensions on $\m^\C$ are in one-to-one correspondence with invariant orderings in $\Delta_M$, and they can be canonically expressed by
	$$JE_\alpha=i\epsilon_\alpha E_\alpha,\quad \alpha\in\Delta_M,$$
where $\epsilon_\alpha=\operatorname{sgn}(\alpha)$ and $\epsilon_{-\alpha}=-\epsilon_\alpha$. In particular, taking into account \eqref{eq:generators}, one has that the Killing form $\Kill$ is $J$-invariant, i.e. $\Kill(JX,JY)=\Kill(X,Y)$ for every $X,Y\in\m$.
		
Similarly, $G$-invariant $2$-forms on $M^{2n}=G/K$ are in one-to-one correspondence with $\ad(\Liek)$-invariant $2$-forms on $\m$. When they are closed, we have the following:
		
\begin{proposition}\label{Prop:2-forms}
	For every closed, $\ad(\Liek)$-invariant $2$-form $\omega$ on $\m$ there exists $Z_\omega\in\mathfrak{z}(\Liek)$ such that
		$$\omega(X,Y)=\Kill([X,Y],Z_\omega)=\Kill([Z_\omega,X],Y)$$
	for every $X,Y\in\m$. In particular, $\omega$ is of type $(1,1)$.
\end{proposition}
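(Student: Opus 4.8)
The plan is to exploit that for a compact semisimple group one has $H^2(\g;\R)=0$, so that $\omega$, viewed as a Lie-algebra cocycle, is automatically a coboundary, and then to choose the primitive so that it is represented by an element of $\z(\Liek)$.

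First I would reformulate the hypothesis. The $\ad(\Liek)$-invariant $2$-form $\omega$ on $\m$ extends by zero to a left-invariant $2$-form $\tilde\omega$ on $G$, namely the pullback to $G$ of the associated $G$-invariant $2$-form on $G/K$; saying that $\omega$ is closed means exactly that $\tilde\omega$ is closed, i.e. a Chevalley--Eilenberg $2$-cocycle of $\g$ (the $\Liek$-slots of $d\tilde\omega$ vanish automatically by $\ad(\Liek)$-invariance). Since $\g$ is semisimple, Whitehead's second lemma gives $H^2(\g;\R)=0$, so $\tilde\omega=d\beta_0$ for some left-invariant $1$-form $\beta_0\in\g^*$. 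As $\tilde\omega$ is $\mathrm{Ad}(K)$-invariant and $d$ is $\mathrm{Ad}(K)$-equivariant, averaging $\beta_0$ over the compact group $K$ yields an $\mathrm{Ad}(K)$-invariant $\beta\in\g^*$ still satisfying $d\beta=\tilde\omega$. Identifying $\g^*$ with $\g$ via the Killing form $\Kill$, we get $\beta=\Kill(Z',\cdot)$ with $Z'\in\g$ invariant under $\ad(\Liek)$, i.e. $[Z',\Liek]=0$.

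Next I would note that such a $Z'$ actually lies in $\z(\Liek)$: since $K=C(S)$ has maximal rank, $\Liek$ contains a maximal torus $\mathfrak{t}$ of $\g$, hence $[Z',\mathfrak{t}]=0$ forces $Z'\in\mathfrak{t}$ ($\mathfrak{t}$ being self-centralizing in the compact $\g$), and $Z'\in\mathfrak{t}\subset\Liek$ with $[Z',\Liek]=0$ gives $Z'\in\z(\Liek)$. Setting $Z_\omega:=-Z'\in\z(\Liek)$ and using $d\beta(X,Y)=-\beta([X,Y])$ for left-invariant $1$-forms, for $X,Y\in\m$ one obtains
$$\omega(X,Y)=\tilde\omega(X,Y)=-\Kill(Z',[X,Y])=\Kill([X,Y],Z_\omega)=\Kill([Z_\omega,X],Y),$$
the last equality by $\ad$-invariance of $\Kill$; this is the asserted formula. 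For the type statement, since $Z_\omega\in\z(\Liek)\subset\Liek$ and the invariant complex structure $J$ on $\m$ is $\ad(\Liek)$-invariant, $J$ commutes with $\ad(Z_\omega)|_\m$; combined with the $J$-invariance of $\Kill|_\m$ recorded above, this gives $\omega(JX,JY)=\Kill([Z_\omega,JX],JY)=\Kill(J[Z_\omega,X],JY)=\Kill([Z_\omega,X],Y)=\omega(X,Y)$, so $\omega$ has no $(2,0)$ or $(0,2)$ part, i.e. it is of type $(1,1)$.

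The points that need most care are the faithful translation of ``closed'' into a Chevalley--Eilenberg cocycle and the averaging that forces the primitive $1$-form to be $\mathrm{Ad}(K)$-invariant; after that, the identification of the centralizer of $\Liek$ with $\z(\Liek)$ (where the maximal rank of $K$ is used) and the Killing-form bookkeeping are routine. A more computational alternative, avoiding Whitehead's lemma, is to diagonalize $\omega$ on $\m^\C$ via $\ad(\h)$-invariance, set $c_\alpha:=\omega(E_\alpha,E_{-\alpha})$ and $c_\gamma:=0$ for $\gamma\in\Delta_K$, deduce from the cocycle identity together with $\ad(\Liek)$-invariance that $\alpha\mapsto c_\alpha$ is additive on $\Delta$, and recover $Z_\omega\in\h$ from $\alpha(Z_\omega)=c_\alpha$; reality, $Z_\omega\in\z(\Liek)$, then follows since the imaginary part of $Z_\omega$ would centralize all of $\g$ and hence vanish.
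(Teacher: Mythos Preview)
Your proof is correct, but it takes a different route from the paper. The paper represents $\omega$ as $\omega(X,Y)=\Kill(\phi X,Y)$ for a skew-symmetric endomorphism $\phi:\m\to\m$, extends $\phi$ by zero on $\Liek$, and then checks directly---using the closedness identity on $\m$ together with $\ad(\Liek)$-invariance to pass from $\m$ to all of $\g$---that $\phi$ is a derivation of $\g$; semisimplicity gives $\phi=\ad_{Z_\omega}$, and $\phi|_\Liek=0$ forces $Z_\omega\in\z(\Liek)$ (implicitly via the same maximal-rank fact you spell out). You instead invoke Whitehead's second lemma to write the extended cocycle as $d\beta_0$, then average over the compact group $K$ to make the primitive $\mathrm{Ad}(K)$-invariant and hence identifiable with an element centralizing $\Liek$. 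The paper's argument is slightly more self-contained (no averaging, and it uses $\operatorname{Der}(\g)=\ad_\g$ rather than $H^2(\g;\R)=0$), while yours is more conceptual and makes the cohomological nature of the statement explicit; they are essentially dual manifestations of semisimplicity. The type-$(1,1)$ computation is the same in both proofs.
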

\begin{proof}
	The first part of the statement is already known (see for instace \cite{Alek,BFR}),  but we give the proof for the sake of completeness. Let $\omega$ be $\ad(\Liek)$-invariant $2$-form on $\m$, that is $\omega(\ad_UX,Y)+\omega(X,\ad_UY)=0$ for every $U\in\Liek$ and $X,Y\in\m$. Being the Killing form nondegerate, there exists a skew-symmetric endomorphism $\phi:\m\to\m$ such that $\omega(X,Y)=\Kill(\phi X,Y)$ for every $X,Y\in\m$. Then, $\omega$ is closed if and only if 
		$$\omega([X,Y]_\m,Z)+\omega([Y,Z]_\m,X)+\omega([Z,X]_\m,Y)=0,$$
	or equivalently
		$$\Kill([X,Y],\phi Z)+\Kill([Y,Z],\phi X)+\Kill([Z,X],\phi Y)=0$$
	for every $X,Y,Z\in\m$. Let us extend $\omega$ to the whole Lie algebra $\g$ by setting $\omega(U,\cdot)=0$ for every $U\in\Liek$, so that $\phi|_\Liek=0$. Then, using the $\ad(\Liek)$-invariance of $\omega$, one can easily verify that the last condition is still true for every $X,Y,Z\in\g$. Therefore, 
	\begin{align*}		
		\Kill(\phi[X,Y],Z)&=\Kill([Y,Z],\phi X)+\Kill([Z,X],\phi Y)\\
			&=-\Kill(Z,[Y,\phi X])-\Kill([X,Z],\phi Y)\\
			&=\Kill([\phi X,Y],Z)+\Kill([X,\phi Y],Z)
	\end{align*}
	for every $X,Y,Z\in\g$. It follows that $\phi[X,Y]=[\phi X,Y]+[X,\phi Y]$, namely $\phi$ is a derivation of $\g$. Since $\g$ is a semisimple Lie algebra, $\operatorname{Der}(\g)=\ad_\g$, namely there exists $Z_\omega\in\g$ such that $\phi=\ad_{Z_\omega}$. In particular $\phi|_\Liek=0$ implies that $Z_\omega$ belongs to to center $\mathfrak{z}(\Liek)$ of $\Liek$, and $\omega$ can be written as
	$$\omega(X,Y)=\Kill(\ad_{Z_\omega}X,Y)=\Kill([Z_\omega,X],Y)=\Kill([X,Y],Z_\omega).$$
	Concerning the second part of the statement, since $Z_\omega\in\Liek$ and $[\m,\Liek]\subset\m$, the $\ad(\Liek)$-invariant complex structure $J:\m\to\m$ corresponding to the invariant complex structure on $M$, is such that $[JX,Z_\omega]=J[X,Z_\omega]$, for every $X\in\m$. Applying also the $J$-invariance of the Killing form, for every $X,Y\in\m$ one has:
	\begin{align*}
		\omega(JX,JY)&=\Kill([Z_\omega,JX]JY)=-\Kill([JX,Z_\omega],JY)=-\Kill(J[X,Z_\omega],JY)\\&=-\Kill([X,Z_\omega],Y)=\omega(X,Y).\qedhere
	\end{align*}
\end{proof}
		
This leads to the following:
		
\begin{theorem}\label{Thm:comp.hom.aqS}
	There exist no compact homogeneous aqS manifolds of maximal rank.
\end{theorem}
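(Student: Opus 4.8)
The plan is to reduce the statement, via the Boothby--Wang picture of Theorem~\ref{Thm:Boothby-Wang1}, to the fact that a generalized flag manifold carries no nonzero invariant closed $2$-form of type $(2,0)$, which is precisely the content of Proposition~\ref{Prop:2-forms}.

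First I would normalise the transitive group. Since $M$ is compact, $\mathrm{Isom}(M,g)$ is a compact Lie group, and the automorphism group of the aqS structure is the closed subgroup of $\mathrm{Isom}(M,g)$ defined by preserving $\xi$ and $\f$; hence it is compact, and its identity component $G:=\mathrm{Aut}_0(M)$ is compact, connected and still acts transitively, so $M=G/H$ has finite fundamental group. The Reeb flow is a one-parameter subgroup of $G$ (recall $\xi$ is Killing and $\Lie_\xi\f=\Lie_\xi\eta=0$), and I would invoke the regularity of the structure in this setting, i.e.\ that $\xi$ generates a free $\mathbb{S}^1$-action; this is where compactness and homogeneity are really used. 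Theorem~\ref{Thm:Boothby-Wang1} then gives a principal circle bundle $\pi\colon M\to B:=M/\xi$ over a Kähler manifold $(B,J,k)$ with $d\eta=\pi^*\omega$, where $\omega$ is a closed $2$-form of type $(2,0)$ on $B$; moreover $\omega$ is \emph{nondegenerate}, since for maximal rank $d\eta=2\Psi$ is nondegenerate on $\D$ (cf.\ \eqref{eq:closedness}). Because $\xi$ is $G$-invariant, $G$ commutes with the Reeb flow and descends to a transitive action on $B$ preserving $J$, $k$ and $\omega$; thus $B$ is a compact homogeneous Kähler manifold, and from the homotopy sequence of the circle bundle $\pi_1(B)$ is a quotient of $\pi_1(M)$, hence finite.

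Next I would identify $B$. A compact homogeneous Kähler manifold with finite fundamental group has trivial complex-torus factor in its structure decomposition, so it is a generalized flag manifold; by Theorem~\ref{Thm:gen.flag} we may write $B=G/K$ with $G$ compact semisimple and $K=C(S)$ the centralizer of a torus, carrying its invariant complex structure. Then $\omega$ is an invariant, closed, nondegenerate $2$-form of type $(2,0)$ on $B$, hence corresponds to a closed $\ad(\Liek)$-invariant $2$-form on $\m\cong T_{eK}B$. By Proposition~\ref{Prop:2-forms} this form is of type $(1,1)$, i.e.\ $\omega(JX,JY)=\omega(X,Y)$ for all $X,Y\in\m$; but type $(2,0)$ means $\omega(JX,JY)=-\omega(X,Y)$, so $\omega$ vanishes identically (equivalently, $H^{2,0}$ of a flag manifold vanishes and the closed $(2,0)$-form $\omega$ is harmonic). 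This contradicts the nondegeneracy of $\omega$ (note $\dim B=4n\ge4$), so no compact homogeneous aqS manifold of maximal rank can exist.

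The step I expect to be the real obstacle is establishing that the Reeb foliation is regular on a compact homogeneous aqS manifold, i.e.\ that $\xi$ has closed circle orbits, so that $B=M/\xi$ is a smooth homogeneous Kähler manifold and both Theorem~\ref{Thm:Boothby-Wang1} and the Lie-theoretic description of $B$ become available. Once this is secured, the type-$(1,1)$ versus type-$(2,0)$ clash via Proposition~\ref{Prop:2-forms} closes the argument immediately.
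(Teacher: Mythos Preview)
Your strategy coincides with the paper's: pass to the Boothby--Wang quotient $B=M/\xi$ via Theorem~\ref{Thm:Boothby-Wang1}, recognise $B$ as a generalized flag manifold, and then collide the type-$(2,0)$ form $\omega$ with Proposition~\ref{Prop:2-forms}. Two steps that you leave soft are precisely where the paper supplies references. The regularity of $\xi$, which you correctly single out as the crux, is handled by citing Boothby--Wang \cite{BW}: on a compact homogeneous contact manifold the characteristic vector field is regular. For the identification of $B$, the paper does \emph{not} go through $\pi_1(M)$; it invokes D\'iaz Miranda--Revent\'os \cite{DM.R.}, which shows directly that the base of the Boothby--Wang fibration of a compact homogeneous contact manifold is simply connected, so Theorem~\ref{Thm:gen.flag} applies at once.

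Your alternative route through $\pi_1$ has a genuine gap as written: the assertion that $M=G/H$ has finite fundamental group merely because $G$ is compact and connected is false in general (take $G$ a torus acting on itself). Finiteness of $\pi_1(M)$ in this setting is in fact a \emph{consequence} of the simply connectedness of $B$ together with the nontriviality of the circle bundle, not an independent input; so your argument, as ordered, is circular. Replacing that step by the citation to \cite{DM.R.} (or by a direct argument that the torus factor of the Borel--Remmert decomposition of a compact homogeneous K\"ahler manifold must vanish here) repairs the proof, after which it is the paper's proof verbatim.
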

\begin{proof}
	Let $(M,\f,\xi,\eta,g)$ be a compact homogeneous aqS manifold of maximal rank, so that $\eta$ is a contact form on $M$, invariant by the transitive action of a Lie group $G$. By \cite{BW}, the Reeb vector field $\xi$ is regular, and then, according to Theorem \ref{Thm:Boothby-Wang1}, $M$ is a principal circle bundle over a K\"ahler manifold $(M/\xi,J,k)$ endowed with a symplectic form $\omega$ of type $(2,0)$ such that $d\eta=\pi^*\omega$, being $\pi:M\to M/\xi$ the bundle projection. More precisely, by results of A. D\'iaz Miranda and A. Revent\'os \cite{DM.R.}, the base manifold $M/\xi$ of the Boothby-Wang fibration of a contact homogeneous manifold turns out to be always simply connected.\\
	Now, since $\xi$ is invariant under the $G$-action on $M$, one can induce a transitive action of $G$ on $M/\xi$, with respect to which $\pi$ is equivariant, i.e. $g\pi(x):=\pi(gx)$ for every $g\in G$, $x\in M$. Straightforward verifications show that this $G$-action on $M/\xi$ preserves the K\"ahler structrue $(J,k)$ and the symplectic form $\omega$. Therefore, $(M/\xi,J,k)$ is a simply connected, compact homogeneous K\"ahler manifold, namely it is isomorphic to a generalized flag manifold (by Theorem \ref{Thm:gen.flag}), and $\omega$ is a non zero, closed invariant $2$-form of type $(2,0)$.  But this is in contrast with Proposition \ref{Prop:2-forms}.
\end{proof}		
		
\begin{remark}
	In particular, from the above theorem it follows that there exist no compact homogeneous double aqS-Sasakian manifolds (since they have maximal rank). Actually, this is a direct consequence of the fact that compact homogeneous double aqS-Sasakian manifolds fiber on compact homogeneous hyperk\"ahler manifolds, which are isomorphic to flat tori \cite[Theorem 1.b]{Alek1}. This is in contrast with the above mentioned result in \cite{DM.R.}, according to which every compact homogeneous contact manifold fibers onto a simply connected symplectic manifold.
\end{remark}	
\medskip

Now, we provide a topological obstruction to the existence of aqS structures on compact manifolds, even in the non homogeneous case, and for any rank.
		
\begin{proposition}\label{Prop:Betti}
	If $(M^{2n+1},\f,\xi,\eta,g)$ is a compact anti-quasi-Sasakian manifold, then the Betti numbers $b_2, b_{2n-1}$ are strictly positive.
\end{proposition}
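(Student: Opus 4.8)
The plan is to exhibit simultaneously a nonzero class in degree $2$ and one in degree $2n-1$, using the closed $2$-form $\Phi$ that comes for free from the aqS condition $d\Phi=0$. I would first record the elementary fact, valid on any almost contact metric manifold $M^{2n+1}$, that $\eta\wedge\Phi^{n}$ is nowhere vanishing: on $\D=\Ker\eta$ the tensor $\f|_\D$ is a $g$-compatible complex structure and $\Phi|_\D$ is its fundamental form, so $\Phi^n|_\D\neq0$. In particular $M$ is orientable and $\int_M\eta\wedge\Phi^{n}\neq0$. The point of the whole argument is then that $\eta\wedge\Phi^{n-1}$ is a \emph{closed} $(2n-1)$-form; granting that, since $\Phi\wedge(\eta\wedge\Phi^{n-1})=\eta\wedge\Phi^{n}$ has nonzero integral, a one-line Stokes argument shows that neither $[\Phi]$ nor $[\eta\wedge\Phi^{n-1}]$ can be exact, giving $b_2\ge1$ and $b_{2n-1}\ge1$ at once (alternatively, once $b_2>0$ is known, Poincaré duality yields $b_{2n-1}=b_2$).

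So everything reduces to the identity $d\eta\wedge\Phi^{n-1}=0$, which together with $d\Phi=0$ gives $d(\eta\wedge\Phi^{n-1})=d\eta\wedge\Phi^{n-1}=0$. I would prove it as follows. Both $d\eta$ and $\Phi$ are annihilated by the contraction with $\xi$ — for $d\eta$ this is the relation $d\eta(\xi,\cdot)=0$ implied by the anti-normal condition, for $\Phi$ it follows from $\f\xi=0$ — so the $2n$-form $d\eta\wedge\Phi^{n-1}$ is itself annihilated by $\iota_\xi$, and hence vanishes on $M$ as soon as it vanishes on $\Lambda^{2n}\D$. On $\D$ the operator $J:=\f|_\D$ is a complex structure compatible with $g|_\D$, $\Phi|_\D$ is the associated $(1,1)$ fundamental form, while $d\eta|_\D$ is $J$-anti-invariant, i.e. of type $(2,0)+(0,2)$ — this is exactly the $\f$-anti-invariance of $d\eta$ noted right after the definition of aqS manifolds. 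A $2$-form of type $(2,0)+(0,2)$ wedged with $(\Phi|_\D)^{n-1}$ is of type $(n+1,n-1)$, hence zero on an $n$-dimensional complex space. This settles the identity for every rank. As a sanity check (and as an alternative proof in the maximal-rank case) one can verify it directly from the explicit expressions \eqref{eq:Phi}, \eqref{eq:Psi} of $\Phi$ and $\Psi=\tfrac12 d\eta$ in the adapted coframe of Remark~\ref{Rmk:basis}; the non-maximal case then follows by decomposing $\D$ into $\Ker\psi$ and the sum of the nonzero eigenspaces of $\psi^2$, on which $d\eta$ is supported.

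With this identity in hand the conclusion is immediate. If $\Phi=d\beta$ for some $1$-form $\beta$, then, using that $\eta\wedge\Phi^{n-1}$ is closed of odd degree $2n-1$,
$$\int_M\eta\wedge\Phi^{n}=\int_M(\eta\wedge\Phi^{n-1})\wedge d\beta=-\int_M d\!\left((\eta\wedge\Phi^{n-1})\wedge\beta\right)=0,$$
contradicting $\int_M\eta\wedge\Phi^{n}\neq0$; hence $[\Phi]\neq0$ in $H^2_{dR}(M)$ and $b_2\ge1$. Symmetrically, if $\eta\wedge\Phi^{n-1}=d\tau$ then $\int_M\eta\wedge\Phi^{n}=\int_M\Phi\wedge d\tau=\int_M d(\Phi\wedge\tau)=0$ using $d\Phi=0$, again a contradiction; hence $[\eta\wedge\Phi^{n-1}]\neq0$ in $H^{2n-1}_{dR}(M)$ and $b_{2n-1}\ge1$.

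The only genuine obstacle is the identity $d\eta\wedge\Phi^{n-1}=0$, and within it the need to handle all ranks uniformly; the cleanest route is the Hermitian linear-algebra observation above, and the subtle point there is simply to be sure that the $\f$-anti-invariance of $d\eta$ is precisely the $(2,0)+(0,2)$ condition relative to $J=\f|_\D$, and that $\Phi|_\D$ is genuinely the compatible fundamental form, so that the bidegree count applies. Everything else — the volume-form computation and the two Stokes arguments — is routine.
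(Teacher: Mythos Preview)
Your argument is correct and follows the same overall strategy as the paper: prove the identity $d\eta\wedge\Phi^{n-1}=0$, deduce that $\eta\wedge\Phi^{n-1}$ is closed, and conclude via Stokes' theorem using the volume form $\eta\wedge\Phi^{n}$ (the paper then invokes Poincar\'e duality for $b_{2n-1}$, which you also mention alongside your direct Stokes argument). The only noteworthy difference is in how the key identity is established: the paper checks it by direct evaluation on a local orthonormal $\f$-basis, while your bidegree observation---that $d\eta|_\D$ has type $(2,0)+(0,2)$ and $\Phi|_\D^{n-1}$ has type $(n-1,n-1)$, so the wedge has components of type $(n\pm1,n\mp1)$ and hence vanishes on a complex $n$-dimensional space---is a cleaner, coordinate-free route to the same fact.
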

\begin{proof}
	First we observe that $d\eta\wedge\Phi^{n-1}=0$. Indeed, it is enough to evaluate the $2n$-form on a local orthornormal $\f$-basis $\{\xi,e_i,\f e_i\}$ ($i=1,\dots,n$). Then the claim is consequence of the facts that $\Phi(\xi,\cdot)=d\eta(\xi,\cdot)=0$ and, for every $i,j=1,\dots,n$,
	$$\Phi(e_i,e_j)=0,\quad \Phi(e_i,\f e_j)=-\delta_{ij},\quad d\eta(e_i,\f e_i)=0,$$
	where the last identity is due to the $\f$-anti-invariance of $d\eta$.
	Now, being $\Phi$ closed, it follows that 
		\begin{equation}\label{eq:eta^Phi}
			d(\eta\wedge\Phi^{n-1})=d\eta\wedge \Phi^{n-1}- \eta\wedge d(\Phi^{n-1}) =0.
		\end{equation} 
	%so that $\eta\wedge\Phi^{2n-1}$ defines a cohomology class in $H^{4n-1}(M,\R)$. 
	Assuming $b_2=0$, i.e. $H^2(M,\R)$ trivial, we have $\Phi=d\alpha$ for some $\alpha\in \Lambda^1(M)$. 
	Since $\eta\wedge\Phi^{n}$ is a volume form, using \eqref{eq:eta^Phi} and applying the Stokes' theorem, we get:
		$$0\neq\int_M\eta\wedge\Phi^{n}=\int_M(\eta\wedge\Phi^{n-1})\wedge\Phi=\int_M (\eta\wedge\Phi^{n-1})\wedge d\alpha=-\int_M d(\eta\wedge\Phi^{n-1}\wedge \alpha)=0,$$
	which gives a contradiction. Thus, it must be $b_2>0$ and, by the Poincaré duality, one also gets $b_{2n-1}>0$. 
\end{proof}
		
As an immediate consequence of the above proposition one gets the following:
		
\begin{corollary}\label{Thm:sfere}
	Every odd dimensional sphere does not admit any aqS structure.
\end{corollary}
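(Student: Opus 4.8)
The plan is to derive this from Proposition~\ref{Prop:Betti} together with the well-known values of the Betti numbers of spheres. First I would recall that for the odd-dimensional sphere $\mathbb{S}^{2n+1}$ with $n\ge 1$, the real cohomology is $H^k(\mathbb{S}^{2n+1},\R)=\R$ for $k=0$ and $k=2n+1$, and is trivial for all other degrees; in particular $b_2(\mathbb{S}^{2n+1})=0$. Now suppose, for contradiction, that $\mathbb{S}^{2n+1}$ carries an anti-quasi-Sasakian structure $(\f,\xi,\eta,g)$. Since $\mathbb{S}^{2n+1}$ is compact, Proposition~\ref{Prop:Betti} applies and forces $b_2(\mathbb{S}^{2n+1})>0$, contradicting $b_2(\mathbb{S}^{2n+1})=0$. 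The case $n=0$, i.e. $\mathbb{S}^1$, is trivial since an aqS manifold has dimension $2n+1\ge 1$; but actually one should note that $\mathbb{S}^1$ does admit the (degenerate, rank~$1$, cok\"ahler) structure, so the statement is understood for $\mathbb{S}^{2n+1}$ with $n\ge 1$, where $b_2$ makes sense as an obstruction. This completes the argument.

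The only genuine subtlety — and the step I would flag as the main point to get right — is making sure the hypotheses of Proposition~\ref{Prop:Betti} are actually met: the proposition requires $M$ to be \emph{compact} (used for Stokes' theorem and for the volume form argument) and \emph{anti-quasi-Sasakian}, but places no restriction on the rank, so it applies equally to the minimal-rank (cok\"ahler) case. Odd spheres are visibly compact, so there is nothing to check there. One might worry whether the dimension convention $2n+1$ in Proposition~\ref{Prop:Betti} forces $b_2>0$ only when $n\ge 1$: indeed the argument in that proof uses $\eta\wedge\Phi^n$ as a volume form and $\Phi^{n-1}$, which for $n=1$ reads $\Phi^0=1$, so it is still valid; for $n=0$ the proposition is vacuous. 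Hence for every sphere of dimension $2n+1\ge 3$ the obstruction bites.

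In summary, the proof is the one-line deduction: $\mathbb{S}^{2n+1}$ ($n\ge 1$) is compact with $b_2=0$, so by Proposition~\ref{Prop:Betti} it admits no aqS structure; and $\mathbb{S}^1$ has dimension too small to carry a nontrivial such structure (and in any case no obstruction is claimed there). I do not expect any real obstacle here — it is a direct corollary, exactly as the word ``\textbf{Corollary}'' and the phrase ``immediate consequence'' in the excerpt indicate; the only care needed is to invoke the correct Betti number ($b_2$, not $b_{2n-1}$, though either works since $b_{2n-1}(\mathbb{S}^{2n+1})=0$ as well for $n\ge 2$, while for $n=1$ one has $2n-1=1$ and $b_1(\mathbb{S}^3)=0$ too) and to state the dimension range for which the assertion is meaningful.
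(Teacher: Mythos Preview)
Your proposal is correct and follows exactly the paper's approach: the corollary is stated there as an ``immediate consequence'' of Proposition~\ref{Prop:Betti} with no further argument, and your deduction from $b_2(\mathbb{S}^{2n+1})=0$ is precisely the intended one. Your additional remarks on the $n=0$ and $n=1$ edge cases are sound but go beyond what the paper records.
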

		
\begin{remark}
	The fact that for a compact aqS manifold the second Betti number is greater than $0$ marks a difference with respect to the Sasakian case, where the vanishing of  $b_2$ can be consequence of some curvature conditions. For instance, it is known that every compact Sasakian manifold of strictly positive curvature has vanishing second Betti number (see \cite[Section 6.8]{Blair} and references therein). It follows that a double aqS-Sasakian manifold cannot have positive sectional curvature. This is coherent with the fact that the scalar curvature of such manifolds is $s=-4n<0$ (see \cite[Theorem 4.12]{DiP.D.}).
\end{remark}

\section{Anti-quasi-Sasakian structures on nilpotent Lie groups}
\label{Sec:nilpotent}

In this section we will prove a classification result for nilpotent Lie groups endowed with invariant anti-quasi-Sasakian structures of maximal rank. First, we recall the construction of aqS structures on a class of $2$-step nilpotent Lie groups (see \cite[Section 3.2 ]{DiP.D.}).

\begin{example}[Weighted Heisenberg Lie groups]\label{Ex:w-Heisenberg}
	For any $\lambda=(\lambda_1,\dots,\lambda_n)\in\R^n$, consider the $(4n+1)$-dimensional Lie algebra   $\h^{4n+1}_\lambda=\operatorname{span}\{\xi,\tau_r,\tau_{n+r},\tau_{2n+r},\tau_{3n+r}\ |\ r=1,\dots,n\}$, such that the only eventually nonvanishing commutators are
	\begin{equation}\label{eq:brackets}
		[\tau_r,\tau_{3n+r}]=[\tau_{n+r},\tau_{2n+r}]=2\lambda_r\xi.
	\end{equation}
	Let us denote by $H^{4n+1}_\lambda$ the simply connected Lie group with Lie algebra $\h^{4n+1}_\lambda$. 
	We consider three left invariant almost contact metric structures $(\f_i,\xi,\eta,g)$, $i=1,2,3$, where $g$ is the Riemannian metric with respect to which the basis is orthonormal, $\eta$ is the dual form of $\xi$, and $\f_i$ is given by
	$$\f_i=\sum_{r=1}^{n}\big(\theta_r\otimes\tau_{in+r}-\theta_{in+r}\otimes\tau_r+\theta_{jn+r}\otimes\tau_{kn+r}-\theta_{kn+r}\otimes\tau_{jn+r}\big),$$
	where $\theta_l$ ($l=1,\dots,4n$) is the dual 1-form of $\tau_l$, and $(i,j,k)$ is an even permutation of $(1,2,3)$. 
	The fundamental 2-forms are
	$$\Phi_i=-\sum_{r=1}^{n}(\theta_r\wedge\theta_{in+r}+\theta_{jn+r}\wedge\theta_{kn+r}).$$
	Since the 1-forms $\theta_l$ are all closed by \eqref{eq:brackets}, then $d\Phi_i=0$ for every $i=1,2,3$. Furthermore,
	$$d\eta=-2\sum_{r=1}^n\lambda_r(\theta_r\wedge\theta_{3n+r}+\theta_{n+r}\wedge\theta_{2n+r}),$$
	so one can easily verify that the structures $(\f_1,\xi,\eta,g)$ and $(\f_2,\xi,\eta,g)$ are anti-quasi-Sasakian, while $(\f_3,\xi,\eta,g) $ is quasi-Sasakian.  For each structure, the rank is equal to $4p+1$, if $p$ is the number of the non zero coefficients.
	
	We call $H^{4n+1}_\lambda$ the \textit{weighted Heisenberg Lie group}, with weights $\lambda=(\lambda_1,\dots,\lambda_n)$.	In particular, when the weights are all equal to 1, then $\h^{4n+1}_\lambda=\h^{4n+1}$ is the real Heisenberg Lie algebra of dimension $4n+1$, and $(\f_3,\xi,\eta,g)$ is the standard Sasakian structure on it, being $d\eta=2\Phi_3$. Actually, in this case $(\f_i,\xi,\eta,g)$ ($i=1,2,3$) is a double aqS-Sasakian structure in the sense of Definition \ref{Def:double aqS-S}.
	
	Being $H^{4n+1}_\lambda$ 2-step nilpotent, by a result of A. I. Malcev \cite{Malcev}, when all the weights $\lambda_r$ are rational numbers, then $H^{4n+1}_\lambda$ admits a cocompact discrete subgroup $\Gamma$, so that an anti-quasi-Sasakian structure on the compact nilmanifold $H^{4n+1}_\lambda/\Gamma$ is induced.	
\end{example}
\medskip

We now establish some general properties of almost contact metric Lie algebras $(\g,\f,\xi,\eta,g)$ of maximal rank.

\begin{proposition}\label{Lemma1}
	Let $(\g,\f,\xi,\eta,g)$ be a Lie algebra endowed with an almost contact metric structure of maximal rank, with $\xi$ is Killing. Then $d\eta(\xi,\cdot)=0$.
	If furthermore the center of $\g$ is non-trivial, then
	\begin{enumerate}
		\item $\z(\g)=\R\xi$;
		\item denoting by $[\cdot,\cdot]_\D$ the component of the Lie bracket of $\g$ along $\D$, $(\D,[\cdot,\cdot]_\D)$ is a Lie algebra, and $\g$ is the $1$-dimensional central extension of $\D$ by the $2$-cocycle $-d\eta$; 
		\item setting $N_\f(X,Y,Z):=g(N_\f(X,Y),Z)$, and assuming 
		\begin{equation}\label{eq:transv.K.}
			d\Phi(X,Y,Z)=N_\f(X,Y,Z)=0\quad \forall X,Y,Z\in\D,
		\end{equation}
		$(\D,[\cdot,\cdot]_\D)$ is a K\"ahler Lie algebra with respect to the structure obtained by restriction.
	\end{enumerate}  
\end{proposition}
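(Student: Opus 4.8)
The plan is to argue each item in turn, starting from the observation that $\xi$ being Killing forces $\Lie_\xi g = 0$, and since the structure has maximal rank, the $2$-form $d\eta$ restricted to $\D$ is nondegenerate. First I would prove $d\eta(\xi,\cdot)=0$: using Cartan's formula $\Lie_\xi\eta = d(\iota_\xi\eta) + \iota_\xi d\eta = \iota_\xi d\eta$ since $\eta(\xi)=1$ is constant, and then showing $\Lie_\xi\eta=0$. For the latter, I would use that $\xi$ is Killing together with $\eta = g(\xi,\cdot)$, so $\Lie_\xi\eta = \Lie_\xi(g(\xi,\cdot)) = (\Lie_\xi g)(\xi,\cdot) + g(\Lie_\xi\xi,\cdot) = 0$. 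Hence $\iota_\xi d\eta = 0$, i.e. $d\eta(\xi,\cdot)=0$.

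For item (1), suppose $Z\in\z(\g)$ is nonzero. Write $Z = a\xi + Z_\D$ with $Z_\D\in\D$. Since $Z$ is central, $[Z,X]=0$ for all $X\in\g$, and in particular $d\eta(Z,X) = -\eta([Z,X]) = 0$ for all $X$. Using $d\eta(\xi,\cdot)=0$ this gives $d\eta(Z_\D, X) = 0$ for all $X\in\g$, hence for all $X\in\D$. But $d\eta|_\D$ is nondegenerate (maximal rank), so $Z_\D = 0$, i.e. $Z = a\xi$ with $a\neq 0$, showing $\z(\g)\subseteq\R\xi$. The reverse inclusion $\R\xi\subseteq\z(\g)$ follows because $d\eta(\xi,\cdot)=0$ means $\eta([\xi,X])=0$ for all $X$, so $[\xi,X]\in\D$; one then needs $[\xi,X]=0$. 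Here I would invoke that $\xi$ is Killing and the structure is left-invariant: $\Lie_\xi g = 0$ together with $\Lie_\xi\eta=0$, and a standard computation for invariant structures (or the identity $\nabla\xi = -\psi$ is skew-symmetric with $\psi\xi=0$) shows $[\xi,X]$ pairs trivially against everything under $g$, forcing $[\xi,X]=0$. So $\z(\g)=\R\xi$.

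For item (2), since $[\xi,X]=0$ for all $X$ (from item 1), the bracket on $\g = \R\xi\oplus\D$ is determined by its $\D$-part: for $X,Y\in\D$, $[X,Y] = [X,Y]_\D - d\eta(X,Y)\,\xi$, using $\eta([X,Y]) = -d\eta(X,Y)$ (since $\iota_X\iota_Y d\eta = -\eta([X,Y])$ when $\eta$ is the $1$-form and $X,Y\in\Ker\eta$). The Jacobi identity for $\g$ then splits: the $\D$-component says $[\cdot,\cdot]_\D$ satisfies Jacobi on $\D$, and the $\xi$-component says precisely that $d\eta$ is a $2$-cocycle on $(\D,[\cdot,\cdot]_\D)$, i.e. $d\eta([X,Y]_\D,Z) + \text{(cyclic)} = 0$; this is automatic since $d\eta$ on $\g$ is closed and $\iota_\xi d\eta=0$. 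Thus $(\D,[\cdot,\cdot]_\D)$ is a Lie algebra and $\g$ is its central extension by the cocycle $-d\eta$. For item (3), the restriction of $\f$ to $\D$ is an almost complex structure $J := \f|_\D$ with $g|_\D$ a compatible metric; the hypothesis \eqref{eq:transv.K.} says exactly that the Nijenhuis tensor of $J$ with respect to $[\cdot,\cdot]_\D$ vanishes and that the Kähler form $\Phi|_\D = g(\cdot, J\cdot)$ is closed in the Chevalley--Eilenberg complex of $(\D,[\cdot,\cdot]_\D)$ — one checks $d_\D\Phi|_\D$ agrees with $d\Phi$ restricted to $\D$-arguments because the correction terms involve $\eta([\cdot,\cdot])$ paired against $\Phi(\xi,-)=0$. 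Hence $(\D,[\cdot,\cdot]_\D, J, g|_\D)$ is a Kähler Lie algebra.

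I expect the main obstacle to be the careful bookkeeping in establishing $[\xi,X]=0$ in item (1): one must genuinely use both that $\xi$ is Killing \emph{and} that we are on a Lie algebra (so the metric and structure tensors are invariant), rather than merely that $\iota_\xi d\eta = 0$, which alone only gives $[\xi,X]\in\D$. The cleanest route is probably to note that $\nabla^g\xi$ is skew-symmetric (Killing) and equals $-\psi$ in the notation of the excerpt with $\psi\xi=0$, so $[\xi,X] = \nabla_\xi X - \nabla_X\xi = \nabla_\xi X + \psi X$; combining with $\Lie_\xi\f=0$-type invariance identities forces the vanishing. The remaining steps — Cartan calculus for $d\eta(\xi,\cdot)=0$, the nondegeneracy argument for $\z(\g)=\R\xi$, and the translation of Jacobi into the cocycle condition — are routine once item (1) is in hand.
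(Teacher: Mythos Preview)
Your proof of $d\eta(\xi,\cdot)=0$ via Cartan's formula is correct and amounts to the same computation the paper does directly: the Killing condition $g([\xi,X],Y)+g(X,[\xi,Y])=0$ with $Y=\xi$ gives $\eta([\xi,X])=0$, hence $d\eta(\xi,X)=0$. Items (2) and (3) are also handled correctly and match the paper.

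The issue is in item (1). You correctly show $\z(\g)\subseteq\R\xi$ using nondegeneracy of $d\eta$ on $\D$, but then you treat the reverse inclusion $\R\xi\subseteq\z(\g)$ as a separate task and identify it as the ``main obstacle.'' It is neither an obstacle nor a separate task: since $\R\xi$ is one-dimensional and you have already exhibited a nonzero $Z=a\xi\in\z(\g)$ (this is exactly the non-trivial center hypothesis), $\xi=a^{-1}Z\in\z(\g)$ and you are done. This is precisely the paper's argument: $\z(\g)\subset\Ker(d\eta)=\R\xi$, and non-triviality of the center forces equality by dimension.

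Your sketched direct argument for $[\xi,X]=0$ --- using only that $\xi$ is Killing and the structure is invariant, without invoking the non-trivial center hypothesis --- would actually fail. On $\mathfrak{su}(2)$ with its standard Sasakian structure, $\xi$ is Killing, the structure has maximal rank, yet $\ad_\xi\neq 0$ since the center is trivial. So the hypothesis $\z(\g)\neq 0$ is genuinely needed, and the clean way to use it is the one-line dimension argument above, not an attempt to prove $\ad_\xi=0$ from scratch.
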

\begin{proof}
	The Killing condition on the Lie algebra $\g$ is expressed by $g([\xi,X],Y)+g(X,[\xi,Y])=0$, for every $X,Y\in\g$. Then, for 
	$Y=\xi$ one has $d\eta(\xi,X)=-\eta([\xi,X])=0$. Owing to the nondegeneracy of $d\eta$ on $\D$, one has that $\operatorname{Ker}(d\eta)=\R\xi$. Being $d\eta(X,Y)=-\eta([X,Y])$ for every $X,Y\in\g$, it is clear that $\z(\g)\subset\operatorname{Ker}(d\eta)$; thus, if the center of $\g$ is non-trivial, then $\z(g)=\operatorname{Ker}(d\eta)=\R\xi$.
	Moreover, as vector spaces $\g=\D\oplus\R\xi=\D\oplus\z(\g)$, and $\g/\z(\g)\cong\D$ by means $X+\z(\g)\mapsto X_\D$. Being $\z(\g)$ an ideal of $\g$, $\g/\z(\g)$ is a Lie algebra and the above isomorphism induces the Lie bracket $[\cdot,\cdot]_\D$ on $\D$. Moreover, for every $X,Y\in\D$ one has $$[X,Y]=[X,Y]_\D+\eta([X,Y])\xi=[X,Y]_\D-d\eta(X,Y)\xi.$$
	
	Now, assuming \eqref{eq:transv.K.}, set $J=\f|_\D$ and $k=g|_{\D\times\D}$. Then $(J,k)$ is an almost Hermitian structure on $(\D,[\cdot,\cdot]_\D)$. For every $X,Y\in\D$
	\begin{align*}
		N_J(X,Y)&=[JX,JY]_\D+J^2[X,Y]_\D-J[X,JY]_\D-J[JX,Y]_\D\\
		&=[\f X,\f Y]_\D+\f^2[X,Y]-\f[X,\f Y]-\f [\f X,Y]\\
		&=N_\f(X,Y)_\D=0,
	\end{align*}
	and hence $J$ is a complex structure. Finally, let us denote by $\Omega=\Phi|_{\D\times\D}$ the fundamental 2-form associated to the structure $(J,k)$. For every $X,Y,Z\in\D$ we have: 
	\begin{align*}
		d\Omega(X,Y,Z)&=-\Omega([X,Y]_\D,Z)-\Omega([Y,Z]_\D,X)-\Omega([Z,X]_\D,Y)\\
		&=-\Phi([X,Y],Z)-\Phi([Y,Z],X)-\Phi([Z,X],Y)\\
		&=d\Phi(X,Y,Z)=0,
	\end{align*}
	and thus $(\D,J,k)$ is K\"ahlerian.
\end{proof}

\begin{remark}
	In \cite{AFV} it is showed that for a contact Lie algebra $(\g,\eta)$ with non-trivial center, $\z(\g)=\R\xi$, where $\xi$ is the Reeb vector field associated with the contact form $\eta$, uniquely determined by $\eta(\xi)=1$ and $d\eta(\xi,\cdot)=0$. In the first part of Proposition \ref{Lemma1}, we are claiming that, for an almost contact metric structure of maximal rank $(\f,\xi,\eta,g)$, whenever $\xi$ is Killing, it coincides with the Reeb vector field associated with the contact form $\eta$.
\end{remark}
\medskip

\noindent Proposition \ref{Lemma1} applies, in particular, to
\begin{itemize}
	\item anti-quasi-Sasakian Lie algebras ($d\Phi=0$, $N_\f=2d\eta\otimes\xi$), in which case $d\eta$ is $\f$-anti-invariant;
	\item quasi-Sasakian Lie algebras ($d\Phi=0$, $N_\f=0$), in which case $d\eta$ is $\f$-invariant.
\end{itemize} 
Conversely, given a K\"ahler Lie algebra $(\h,[\cdot,\cdot]_\h,J,k)$, if $\omega$ is a $2$-cocycle on $\h$,  one can consider the $1$-dimensional central extension of $\h$, $\g:=\h\oplus\R\xi$, whose Lie bracket $[\cdot,\cdot]$ is defined by
	$$[X,\xi]=0,\quad [X,Y]=[X,Y]_\h-\omega(X,Y)\xi$$
for every $X,Y\in\h$. Then $\g$ is naturally endowed with an almost contact metric structure $(\f,\xi,\eta,g)$, where $\eta$ is the dual $1$-form of $\xi$, and $(\f,g)$ are obtained extending the K\"ahler structure $(J,k)$ in such a way that $\h=\operatorname{Ker}\eta=\operatorname{Im}\f$, $\xi$ is unitary and orthogonal to $\h$. One can easily see that $(\f,\xi,\eta,g)$ satisfies \eqref{eq:transv.K.}. Indeed, the fundamental $2$-form $\Phi=g(\cdot,\f\cdot)$ is closed because so is the fundamental $2$-form $\Omega$ on $\h$, and $\xi$ is a central element. Moreover, for every $X,Y\in\h$, $d\eta(\xi,X)=-\eta([\xi,X])=0$ and $d\eta(X,Y)=-\eta([X,Y])=\omega(X,Y)$. Therefore, 
	$$N_\f(\xi,X)=\f^2[\xi,X]-\f[\xi,\f X]+d\eta(\xi,X)\xi=0$$ 
and
\begin{align*}
	N_\f(X,Y)&=[\f X,\f Y]+\f^2[X,Y]-\f[X,\f Y]-\f[\f X,Y]+ d\eta(X,Y)\xi\\
	&=[JX,JY]_\h-\omega(JX,JY)\xi-[X,Y]_\h-J[X,JY]_\h-J[JX,Y]_\h+\omega(X,Y)\xi\\
	&=-\omega(JX,JY)\xi+\omega(X,Y)\xi.
\end{align*}
In particular, 
\begin{itemize}
		\item if $\omega(JX,JY)=-\omega(X,Y)$, then $N_\f=2d\eta\otimes\xi$ and the $(\f,\xi,\eta,g)$-structure is anti-quasi-Sasakian;
		\item if $\omega(JX,JY)=\omega(X,Y)$, then $N_\f=0$ and the $(\f,\xi,\eta,g)$-structure is quasi-Sasakian; it is Sasakian if and only if $\omega=\Omega$.  
	\end{itemize}

\begin{theorem}\label{Thm:aqS-nil.Lie.alg.}
	Let $(\g,\f,\xi,\eta,g)$ a nilpotent anti-quasi-Sasakian Lie algebra of maximal rank. Then it is isomorphic to a weigthed Heisenberg Lie algebra. In particular, it carries another aqS structure $(\f_2,\xi,\eta,g)$ and a quasi-Sasakian structure $(\f_3,\xi,\eta,g)$ such that $\f\f_2=\f_3=-\f_2\f$.
\end{theorem}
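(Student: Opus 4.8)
The plan is to exploit Proposition~\ref{Lemma1}, which applies to any nilpotent aqS Lie algebra of maximal rank since a nilpotent Lie algebra has non-trivial center. By item (2), $\g$ is the $1$-dimensional central extension $\D\oplus\R\xi$ of the Lie algebra $(\D,[\cdot,\cdot]_\D)$ by the $2$-cocycle $-d\eta$, with $\z(\g)=\R\xi$; by item (3) (and the anti-normal condition, which guarantees \eqref{eq:transv.K.}) the quotient $(\D,[\cdot,\cdot]_\D,J,k)$ is a K\"ahler Lie algebra, where $J=\f|_\D$ and $k=g|_{\D\times\D}$. Since $\g$ is nilpotent, so is $\D$. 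The first key step is therefore: a nilpotent K\"ahler Lie algebra is abelian. This is a classical result of Hano / Benson--Gordon for the solvable case and of Hasegawa--Dotti--Fino for nilpotent Lie algebras (a nilpotent Lie algebra admitting a K\"ahler structure is abelian); I would cite it directly. Hence $[\cdot,\cdot]_\D=0$ and $\g$ is a $1$-dimensional central extension of the abelian Lie algebra $\R^{4n}$ by the cocycle $\omega:=-d\eta$, so $[X,Y]=-\,d\eta(X,Y)\,\xi$ for $X,Y\in\D$, and $d\eta$ is nondegenerate on $\D$ (maximal rank) and $\f$-anti-invariant (anti-normal condition).

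The second step is to put $d\eta$ in normal form. Apply Lemma~\ref{Lemma:omega_basis} and Remark~\ref{Rmk:basis} to the aqS manifold (here the Lie group $H$ with Lie algebra $\g$): there is an orthonormal basis $\{\xi,e_i,e_{n+i},e_{2n+i},e_{3n+i}\}$ of $\g$ with $e_{n+i}=\tfrac1{\lambda_i}Ae_i$, $e_{2n+i}=\f e_i$, $e_{3n+i}=\tfrac1{\lambda_i}\psi e_i$, $\psi^2 e_i=-\lambda_i^2 e_i$, $\lambda_i\neq0$, and $\Psi$ given by \eqref{eq:Psi}. Since $d\eta=2\Psi$, expressing this in the dual basis $\e_l$ of $e_l$ gives
\begin{equation*}
	d\eta=-2\sum_{i=1}^n\lambda_i(\e_i\wedge\e_{3n+i}+\e_{n+i}\wedge\e_{2n+i}).
\end{equation*}
Converting the cocycle identity $[X,Y]=-d\eta(X,Y)\xi$ into brackets on the $e_l$ then yields exactly $[e_i,e_{3n+i}]=[e_{n+i},e_{2n+i}]=2\lambda_i\xi$ and all other brackets zero, which is precisely \eqref{eq:brackets} with $\tau_l=e_l$. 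Thus the map $\tau_l\mapsto e_l$, $\xi\mapsto\xi$, is a Lie algebra isomorphism $\h^{4n+1}_\lambda\to\g$, and the weights $\lambda_i$ are non-zero as required.

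The final step records the additional structures. On $\h^{4n+1}_\lambda$ the almost contact metric structures $\f_1,\f_2,\f_3$ of Example~\ref{Ex:w-Heisenberg} satisfy $\f_1\f_2=\f_3=-\f_2\f_1$, with $\f_1,\f_2$ aqS and $\f_3$ quasi-Sasakian; transporting them along the isomorphism produces $(\f_2,\xi,\eta,g)$ aqS and $(\f_3,\xi,\eta,g)$ quasi-Sasakian with $\f\f_2=\f_3=-\f_2\f$ on $\g$ (identifying $\f=\f_1$). I expect the main obstacle to be purely bookkeeping: checking that the orthonormal frame of Lemma~\ref{Lemma:omega_basis}, which a priori only diagonalizes $\psi^2$ and respects \eqref{eq:A.f.psi}, genuinely matches the index pattern of \eqref{eq:brackets}---i.e.\ that the pairs producing $\xi$ in the bracket are $(e_i,e_{3n+i})$ and $(e_{n+i},e_{2n+i})$ with equal coefficient $2\lambda_i$. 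This follows from the explicit form \eqref{eq:Psi} of $\Psi$ together with $d\eta=2\Psi$, so it is routine but must be done carefully; everything else reduces to citing the nilpotent-K\"ahler-implies-abelian theorem and to the computations already performed in the paragraph preceding the theorem.
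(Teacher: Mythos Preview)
Your proposal is correct and follows essentially the same route as the paper: invoke Proposition~\ref{Lemma1} to see that $\D$ is a nilpotent K\"ahler Lie algebra, use the classical fact that such a Lie algebra is abelian (the paper combines Hano's flatness result with Milnor's theorem rather than citing Benson--Gordon or Hasegawa directly, but the content is identical), and then read off the Heisenberg brackets from the normal form \eqref{eq:Psi} of $\Psi=\tfrac12 d\eta$ in the basis of Remark~\ref{Rmk:basis}. One small bookkeeping point worth double-checking in your final step: with the conventions of Remark~\ref{Rmk:basis} one has $\f e_i=e_{2n+i}$, so under $e_l\leftrightarrow\tau_l$ the given $\f$ matches the Heisenberg $\f_2$ rather than $\f_1$; this is harmless (just relabel), but be careful when asserting $\f\f_2=\f_3$ versus $-\f_3$.
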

\begin{proof}
	Being a nilpotent Lie algebra, $\g$ has non trivial center. Moreover, since $\xi$ is Killing, by Proposition \ref{Lemma1} we have that $\z(\g)=\R\xi$ and $\D\cong \g/\z(\g)$ is a nilpotent K\"ahler Lie algebra. By a result of Hano \cite{Hano} $\D$ is flat, and \cite[Theorem 2.4]{Milnor} ensures that it is abelian. Therefore, for every $X,Y\in\D$
	$$[X,Y]=-d\eta(X,Y)\xi.$$ Considering an orthonormal basis  $\{\xi,e_i,e_{n+i},e_{2n+i},e_{3n+i}\}$ as in Remark \ref{Rmk:basis}, being $d\eta=2\Psi$, from equation \eqref{eq:Psi} we deduce that the only non-vanishing Lie brackets in $\g$ are
	$$[e_i,e_{3n+i}]=[e_{n+i},e_{2n+i}]=2\lambda_i\xi$$
	for every $i=1,\dots,n$. Therefore, setting $\lambda=(\lambda_1,\dots,\lambda_n)$, the linear isomorphism $F$ from $\g$ to the Heisenberg Lie algebra $\h_\lambda^{4n+1}$ that maps the $\xi$ to the Reeb vector field of $\h_\lambda^{4n+1}$ and such that $F(e_i)=\tau_i$ for every $i=1,\dots,n$, 
	is an isomorphism of Lie algebras and aqS structures.
\end{proof}

In terms of Lie groups, the above result gives the following:

\begin{theorem}\label{Thm:nilmanifold}
	Let $(G,\f,\xi,\eta,g)$ be a simply connected, nilpotent Lie group endowed with a left-invariant anti-quasi-Sasakian structure of maximal rank. Then $G$ is isomorphic to a weighted Heisenberg Lie group $H^{4n+1}_\lambda$.
\end{theorem}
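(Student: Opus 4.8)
The plan is to deduce Theorem \ref{Thm:nilmanifold} directly from Theorem \ref{Thm:aqS-nil.Lie.alg.} by passing from Lie algebras to simply connected Lie groups. First I would recall that, since $(G,\f,\xi,\eta,g)$ carries a \emph{left-invariant} almost contact metric structure, this structure is completely determined by its value at the identity, i.e. by the induced almost contact metric structure $(\f_e,\xi_e,\eta_e,g_e)$ on the Lie algebra $\g=T_eG$. The hypotheses that the structure is anti-quasi-Sasakian and of maximal rank are pointwise/left-invariant conditions on $d\Phi$, $N_\f$ and $\eta\wedge(d\eta)^{2n}$, hence they descend to the corresponding conditions on $(\g,\f,\xi,\eta,g)$; thus $(\g,\f,\xi,\eta,g)$ is a nilpotent anti-quasi-Sasakian Lie algebra of maximal rank in the sense of Theorem \ref{Thm:aqS-nil.Lie.alg.}.

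Next I would invoke Theorem \ref{Thm:aqS-nil.Lie.alg.}: there is an isomorphism of Lie algebras $F\colon\g\to\h^{4n+1}_\lambda$ for suitable nonzero weights $\lambda=(\lambda_1,\dots,\lambda_n)$ which moreover intertwines the aqS structures. The final step is the standard Lie-theoretic one: since $G$ is simply connected and nilpotent, and $H^{4n+1}_\lambda$ is by definition the simply connected Lie group with Lie algebra $\h^{4n+1}_\lambda$, the Lie algebra isomorphism $F$ integrates to a unique Lie group isomorphism $\tilde F\colon G\to H^{4n+1}_\lambda$ (Lie's second theorem together with uniqueness of simply connected integration). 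Because $\tilde F$ is a group isomorphism with $d\tilde F_e=F$, and both structures are left-invariant, $\tilde F$ automatically carries the left-invariant aqS structure on $G$ to the left-invariant aqS structure on $H^{4n+1}_\lambda$ described in Example \ref{Ex:w-Heisenberg}.

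There is essentially no hard step here: the content is entirely in Theorem \ref{Thm:aqS-nil.Lie.alg.}, and the only thing to be careful about is the bookkeeping that the \emph{geometric} isomorphism (not just the bare group isomorphism) is obtained, which follows because $F$ was constructed in the proof of Theorem \ref{Thm:aqS-nil.Lie.alg.} to respect $\xi$, $\eta$, $g$ and the $\f_i$. If anything could be called an obstacle it is purely expository: one must phrase the maximal-rank hypothesis at the group level and observe it is equivalent to the Lie-algebra-level hypothesis, but this is immediate since $\eta$ and $d\eta$ are left-invariant so $\eta\wedge(d\eta)^{2n}$ is a left-invariant (hence nowhere-zero iff nonzero at $e$) top form. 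Hence the proof is a one-paragraph reduction.

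\begin{proof}
Since the anti-quasi-Sasakian structure $(\f,\xi,\eta,g)$ on $G$ is left-invariant, it is determined by its value at the identity, which defines an almost contact metric structure on the Lie algebra $\g$ of $G$; we denote it again by $(\f,\xi,\eta,g)$. The conditions $d\Phi=0$ and $N_\f=2d\eta\otimes\xi$ are left-invariant tensorial identities, hence hold on $\g$, and likewise $\eta\wedge(d\eta)^{2n}$ is a left-invariant $(4n+1)$-form, so it is nowhere vanishing on $G$ if and only if it is non-zero on $\g$; therefore $(\g,\f,\xi,\eta,g)$ is a nilpotent anti-quasi-Sasakian Lie algebra of maximal rank. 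By Theorem \ref{Thm:aqS-nil.Lie.alg.} there exist non-zero weights $\lambda=(\lambda_1,\dots,\lambda_n)$ and a Lie algebra isomorphism $F\colon\g\to\h^{4n+1}_\lambda$ which is also an isomorphism of aqS structures. Since $G$ is simply connected, $F$ integrates to a (unique) Lie group homomorphism $\widetilde F\colon G\to H^{4n+1}_\lambda$ with $d\widetilde F_e=F$; as $F$ is a linear isomorphism and $H^{4n+1}_\lambda$ is connected and simply connected, $\widetilde F$ is a Lie group isomorphism. Finally, because $\widetilde F$ is a group isomorphism with differential $F$ at the identity, and the structures on both sides are left-invariant, $\widetilde F$ pulls back the left-invariant aqS structure of $H^{4n+1}_\lambda$ (as in Example \ref{Ex:w-Heisenberg}) to that of $G$. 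Hence $G$ is isomorphic, as a Lie group with aqS structure, to the weighted Heisenberg Lie group $H^{4n+1}_\lambda$.
\end{proof}
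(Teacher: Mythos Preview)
Your proposal is correct and follows exactly the approach intended in the paper: Theorem \ref{Thm:nilmanifold} is stated there as an immediate group-level reformulation of Theorem \ref{Thm:aqS-nil.Lie.alg.}, with no separate proof given, precisely because the passage from the Lie algebra isomorphism to the simply connected Lie group isomorphism is the standard integration argument you spell out. Your added remark that $\widetilde F$ also matches the left-invariant aqS structures is a welcome clarification, fully in line with the paper's construction of $F$ in the proof of Theorem \ref{Thm:aqS-nil.Lie.alg.}.
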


As a consequence, a compact nilmanifold $G/\Gamma$ of dimension $4n+1$ admits an anti-quasi-Sasakian structure of maximal rank induced by a left-invariant aqS structure on $G$ if and only $G$ is isomorphic to $H^{4n+1}_\lambda$.

We conclude the section pointing out that an analogous result holds for nilpotent quasi-Sasakian Lie algebras of maximal rank. Indeed, on any weighted Heisenberg Lie algebra of dimension $2n+1$, $\h^{2n+1}_\lambda=\operatorname{span}\{\xi,\tau_r,\tau_{n+r}\ |\ r=1,\dots,n\}$, with $\lambda=(\lambda_1,\dots,\lambda_n)$ and nonvanishing Lie brackets $[\tau_r,\tau_{n+r}]=2\lambda_r\xi$, one can define a quasi-Sasakian structure $(\f,\xi,\eta,g)$, by taking as usual $\eta$ as the dual form of $\xi$, $g$ as the scalar product with respect to which $\{\xi,\tau_r,\tau_{n+r}\}$ is an orthonormal basis and $\f=\sum_{r=1}^n(\theta_r\otimes\tau_{n+r}-\theta_{n+r}\otimes\tau_r)$ (see for instance \cite[Section 9.3]{AFF}). 

Now, taking a quasi-Sasakian manifold $(M^{2n+1},\f,\xi,\eta,g)$, we show that there exists a local orthonormal $\f$-basis $\{\xi,e_i,\f e_i\}$ ($i=1,\dots,n$), with respect to which
\begin{equation}\label{eq:deta_qS}
	d\eta=-2\sum_{i=1}^n\lambda_i\e_i\wedge\e_{n+i},
\end{equation}
being $\e_l$ the dual form of $e_l$, for every $l=1,\dots,2n$.  
Indeed, being $\xi$ Killing, we can write $d\eta(X,Y)=2g(X,\psi Y)$, with $\psi=-\nabla\xi$ skew-symmetric endomorphism. Then, by the $\f$-invariance of $d\eta$, one has that $\f\psi=\psi\f$, so that $A:=\f\psi$ is a symmetric operator. Thus, in order to obtain \eqref{eq:deta_qS}, it is sufficient to take $e_i,\f e_i$ as eigenvector fields of $A$ associated with the eigenvalue $-\lambda_i$. 
Therefore, using Proposition \ref{Lemma1} and arguing as in the proof of Theorem \ref{Thm:aqS-nil.Lie.alg.}, we can state the following:
	
\begin{theorem}\label{Thm:qS-nil.}
	Let $(G,\f,\xi,\eta,g)$ be a simply connected, nilpotent Lie group endowed with a left-invariant quasi-Sasakian structure of maximal rank. Then $G$ is isomorphic to a wighted Heisenberg Lie group $H^{2n+1}_\lambda$.
\end{theorem}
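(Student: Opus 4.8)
The plan is to mirror the proof of Theorem \ref{Thm:aqS-nil.Lie.alg.} almost verbatim, the only genuinely new input being the normal form \eqref{eq:deta_qS} for $d\eta$ that has just been established at the level of the Lie algebra. So I would work with the Lie algebra $\g$ of $G$ and produce an isomorphism onto some $\h^{2n+1}_\lambda$; passing back to simply connected Lie groups is then automatic since a simply connected nilpotent Lie group is determined up to isomorphism by its Lie algebra.

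First I would observe that a nilpotent Lie algebra has non-trivial center, and that for a quasi-Sasakian structure $\xi$ is Killing (this is classical, and in any case the quasi-Sasakian condition $d\Phi=0$, $N_\f=0$ implies it); moreover quasi-Sasakian structures satisfy \eqref{eq:transv.K.} since $N_\f=0$ and $d\Phi=0$ trivially restrict to $\D$. Hence Proposition \ref{Lemma1} applies and gives $\z(\g)=\R\xi$, together with the fact that $(\D,[\cdot,\cdot]_\D)$ is a nilpotent K\"ahler Lie algebra and $\g=\D\oplus\R\xi$ is the central extension of $\D$ by the cocycle $-d\eta$. By Hano's theorem a nilpotent K\"ahler Lie algebra is flat, and by \cite[Theorem 2.4]{Milnor} a flat Lie algebra with a complete metric that is nilpotent is abelian; therefore $[\cdot,\cdot]_\D=0$ and for all $X,Y\in\D$ one has $[X,Y]=-d\eta(X,Y)\,\xi$.

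Next I would invoke the normal form established just above the statement: there is an orthonormal $\f$-basis $\{\xi,e_i,\f e_i\}$ of $\g$, with dual forms $\e_l$, such that $d\eta=-2\sum_{i=1}^n\lambda_i\,\e_i\wedge\e_{n+i}$. Combining this with $[X,Y]=-d\eta(X,Y)\xi$, the only possibly nonzero brackets are $[e_i,\f e_i]=2\lambda_i\xi$, which is exactly the bracket relation defining $\h^{2n+1}_\lambda$ with weights $\lambda=(\lambda_1,\dots,\lambda_n)$. The linear map sending $\xi$ to the Reeb vector of $\h^{2n+1}_\lambda$, $e_i\mapsto\tau_i$ and $\f e_i\mapsto\tau_{n+i}$ is then an isomorphism of Lie algebras carrying the quasi-Sasakian structure to the standard one on $\h^{2n+1}_\lambda$ described in Example-style discussion above (with $\f=\sum_r(\theta_r\otimes\tau_{n+r}-\theta_{n+r}\otimes\tau_r)$). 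Integrating, $G\cong H^{2n+1}_\lambda$.

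The only subtle point — the step I would flag as the real content — is the reduction "nilpotent K\"ahler $\Rightarrow$ abelian": it relies on the Hano flatness result together with Milnor's structure theorem for flat left-invariant metrics, and one must make sure the hypotheses (completeness of the metric, which holds for left-invariant metrics on Lie groups) are in force. Everything else is bookkeeping: the translation of \eqref{eq:deta_qS} into bracket relations, and checking that the linear isomorphism $F$ intertwines $\f$ with the model $\f$. I would keep this last verification to one line, exactly as in the proof of Theorem \ref{Thm:aqS-nil.Lie.alg.}, since it is identical in structure.
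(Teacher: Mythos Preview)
Your proposal is correct and follows essentially the same route as the paper: the paper explicitly says the result is obtained ``using Proposition \ref{Lemma1} and arguing as in the proof of Theorem \ref{Thm:aqS-nil.Lie.alg.}'', which is precisely the mirroring you describe, with the normal form \eqref{eq:deta_qS} playing the role of \eqref{eq:Psi}. The only remark is that your caveat about completeness for the Hano--Milnor step is unnecessary: Milnor's Theorem 2.4 is formulated at the level of Lie algebras with an inner product, so no metric completeness hypothesis is needed.
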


\noindent This result generalizes the well known fact that, up to isomorphisms, the only $(2n+1)$-dimensional nilpotent Lie algebra endowed with a Sasakian structure is the Heisenberg Lie algebra $\h^{2n+1}$ (with $\lambda=(1,\dots,1)$) \cite{AFV}.
Notice also that as consequence of Theorem \ref{Thm:qS-nil.}, a compact nilmanifold $G/\Gamma$ of dimension $2n+1$ admits a quasi-Sasakian structure of maximal rank induced by a left-invariant quasi-Sasakian structure on $G$ if and only if $G$ is isomorphic to $H^{2n+1}_\lambda$. This is coherent with \cite{CM.dN.M.Y_Sas.Nil}, where the result is proved in the Sasakian case, even for non invariant structures on the Lie group $G$.

\bigskip
		
{\sc Dario Di Pinto\\[0.5em]
		Dipartimento di Matematica\\
		Università degli Studi di Bari Aldo Moro\\
		Via E. Orabona, 4, 70125, Bari, Italy
	}\\
{\em e-mail: } {\tt dario.dipinto@uniba.it} 

\end{document}